\documentclass[11pt]{article}

\usepackage[margin=1in]{geometry}
\usepackage{amsmath,amssymb,amsthm,mathtools}
\usepackage{enumitem}
\usepackage{microtype}
\usepackage{xcolor}
\usepackage{booktabs}
\usepackage{tikz}
\usepackage{hyperref}
\usepackage[nameinlink,noabbrev]{cleveref}
\usepackage{authblk}

\hypersetup{
  colorlinks=true,
  linkcolor=blue!55!black,
  citecolor=blue!55!black,
  urlcolor=blue!55!black,
  pdftitle={Discounted Hitting Domination on Graphs},
  pdfauthor={Julian D. Allagan, Kevin Pereyra, and William A. Massey}
}

\newtheorem{theorem}{Theorem}[section]
\newtheorem{proposition}[theorem]{Proposition}
\newtheorem{lemma}[theorem]{Lemma}
\newtheorem{corollary}[theorem]{Corollary}

\theoremstyle{definition}
\newtheorem{definition}[theorem]{Definition}
\newtheorem{example}[theorem]{Example}
\newtheorem{problem}[theorem]{Problem}
\theoremstyle{remark}
\newtheorem{remark}[theorem]{Remark}

\DeclareMathOperator{\dist}{dist}
\DeclareMathOperator{\arcosh}{arcosh}
\DeclareMathOperator{\sech}{sech}
\DeclareMathOperator{\Sp}{Sp}

\newcommand{\N}{\mathbb{N}}
\newcommand{\Q}{\mathbb{Q}}
\newcommand{\one}{\mathbf{1}}
\newcommand{\pospart}[1]{\left(#1\right)_{+}}

\title{Discounted Hitting Domination on Graphs with Submodularity, Complexity and Exact Algorithms}

\author[1]{Julian D.\ Allagan}
\author[2]{Kevin Pereyra}
\author[3]{William A.\ Massey}
\affil[1]{Department of Mathematics, University of Maryland Eastern Shore, Princess Anne, MD 21853, USA\\ \href{mailto:jallagan@umes.edu}{jallagan@umes.edu}}
\affil[2]{Departamento de Matem\'atica, Universidad Nacional de San Luis, San Luis 5700, Argentina\\ \href{mailto:kdpereyra@unsl.edu.ar}{kdpereyra@unsl.edu.ar}}
\affil[3]{Department of Operations Research and Financial Engineering, Princeton University, Princeton, NJ 08544, USA\\ \href{mailto:wmassey@princeton.edu}{wmassey@princeton.edu}}
\date{}

\begin{document}
\maketitle

\begin{abstract}
On a network with a fixed set of verified sources, discounted averaging induces an equilibrium support
$h_i^S=\mathbb{E}_i[\lambda^{T_S}]$,
the discounted probability that a random walk reaches $S$ before attenuation. We define the
\emph{discounted hitting domination number} $\delta_{\lambda,\tau}(G)$ as the minimum number of
sources required to guarantee $h_i^S\ge\tau$ at every vertex. Although this potential is known through
penalized and group hitting probabilities, the associated minimum-cardinality uniform-coverage problem
appears to be new. Aggregate support is monotone submodular, while the uniform-floor problem is an exact
submodular-cover problem. Moreover, if
$\lambda^{r+1}<\tau\le\left(\frac{\lambda}{\Delta}\right)^r$, then $\delta_{\lambda,\tau}(G)$ equals the distance-$r$ domination number. This yields NP-completeness
and APX-completeness at $(\lambda,\tau)=(1/4,1/14)$ on graphs of maximum degree three. For spiders, we
obtain an exact finite-state characterization and a polynomial-time algorithm for every fixed rational pair
$(\lambda,\tau)$, and show that the branching vertex need not belong to a minimum source set. Finally, an
exact mixed-integer linear formulation certifies optimal placements on a real network and a synthetic graph
and demonstrates substantial differences from degree, closeness, and classical domination.
\end{abstract}

\bigskip
\noindent\textbf{Keywords:} discounted hitting domination; first-hitting potential; submodular cover; distance domination; spider trees; mixed-integer linear programming; source placement.

\smallskip
\noindent\textbf{MSC 2020:} 05C69; 05C85; 60J20; 68Q17; 90C11; 91D30.
\section{Introduction}\label{sec:intro}

Let $S$ be a set of pinned, verified vertices in a network whose remaining
vertices repeatedly average the information received from their neighbors,
with a fidelity factor $\lambda\in(0,1)$ applied at each step. The resulting
equilibrium support at a vertex $i$ is $h_i^S=\mathbb{E}_i[\lambda^{T_S}]$,
where $T_S$ is the first hitting time of $S$ by the associated random walk.
We ask for the smallest source set for which every vertex has support at least
a prescribed threshold $\tau$. This leads to the invariant
$\delta_{\lambda,\tau}(G)$, which we call the
\emph{discounted hitting domination number}; a source set attaining the floor
is a \emph{$(\lambda,\tau)$-dominating set}. The terminology distinguishes this
placement parameter from the enumerative invariant $\zeta(G)$, the number of
minimum dominating sets of a graph, historically called the dominion of $G$
\cite{allagan2021,su2024,allagan2026}.

The potential $h_i^S$ is not new in itself. It is a multiplicatively discounted
first-hitting quantity and coincides, under the corresponding conventions, with
penalized hitting probability, group hitting probability, and a monotone
transform of discounted hitting time
\cite{sarkar2010,wu2014,wu2016,guo2024}. It is also closely related to
random-walk proximity measures and to linear consensus models with pinned or
stubborn agents
\cite{degroot1974,friedkin1990,ghaderi2014,hunter2022,fouss2007,fouss2016}.
Pinning-control problems likewise ask which nodes should be fixed, but their
objective is typically synchronization or trajectory control rather than
minimum-cardinality coverage under a uniform discounted hitting constraint
\cite{xing2019}. Related random-walk optimization problems evaluate a prescribed
target set or rank candidate targets
\cite{li2014,mavroforakis2016,guo2024}. To our knowledge, minimizing the size of
a target set subject to $h_i^S\ge\tau$ for every vertex $i$ has not previously
been studied as a graph-domination problem.

The resulting parameter connects probabilistic hitting structure with classical
domination and combinatorial optimization. We show that aggregate support is a
monotone submodular set function and that the uniform-floor problem admits an
exact submodular-cover formulation and an exact mixed-integer linear
formulation. For graphs of maximum degree $\Delta$, the parameter recovers
distance-$r$ domination whenever
$\lambda^{r+1}<\tau\le(\lambda/\Delta)^r$. In particular, at
$(\lambda,\tau)=(1/4,1/14)$ the decision problem is NP-complete and the
minimum-cardinality optimization problem is APX-complete even on graphs of
maximum degree three. For spiders, we derive an exact finite-state
characterization based on transfer data along the legs and obtain a
polynomial-time algorithm for every fixed rational pair $(\lambda,\tau)$.
The branching vertex need not belong to a minimum source set. We also use the
mixed-integer formulation to certify optimal placements on a real network and
a synthetic graph, where the discounted-floor optimum can differ substantially
from degree and closeness rankings and from minimum dominating sets.

The remainder of the paper is organized as follows. \Cref{sec:model} develops
the pinned discounted dynamics and the hitting-time representation.
\Cref{sec:structural} establishes the distance bounds and the recovery of
distance domination, while \cref{sec:optimization} gives the submodular and
mixed-integer formulations and the complexity results.
\Cref{sec:attenuation,sec:spiders} develop the transfer method and the exact
algorithm on spiders, \cref{sec:dense-sparse} treats complete graphs and stars,
and \cref{sec:experiments} presents the computational comparisons.
\section{Pinned discounted trust dynamics}\label{sec:model}

Let $V=\{1,\dots,n\}$ be a finite agent set, and let
$W=(w_{ij})_{i,j\in V}$ be a row-stochastic matrix, with $w_{ij}\ge0$ and
$\sum_j w_{ij}=1$. We interpret $w_{ij}$ as the share of agent $i$'s trust
assigned to agent $j$; the associated directed trust graph has an arc
$i\to j$ whenever $w_{ij}>0$. Let
$\Lambda=\operatorname{diag}(\lambda_1,\dots,\lambda_n)$ with
$0<\lambda_i<1$, and write $\lambda_*:=\max_i\lambda_i<1$.

\begin{definition}[Pinned discounted dynamics]\label{def:dynamics}
Let $\varnothing\neq S\subseteq V$ be a source set. Vertices in $S$ are pinned
at unit support. For $x(0)\in[0,1]^V$ with $x_i(0)=1$ for $i\in S$, define
\begin{equation}\label{eq:dynamics}
x_i(t+1)=
\begin{cases}
1, & i\in S,\\[2mm]
\lambda_i\displaystyle\sum_{j\in V}w_{ij}x_j(t), & i\notin S.
\end{cases}
\end{equation}
The coordinate $x_i(t)$ is the verification-support level at vertex $i$
after $t$ updates. The factor $\lambda_i$ is the fraction of support retained
at $i$ in one update, while $1-\lambda_i$ represents local attenuation.
No probabilistic calibration of $x_i(t)$ as a posterior probability is assumed.
\end{definition}

Let $U=V\setminus S$. After ordering the vertices in $U$ before those in $S$,
the non-source coordinates satisfy
\begin{equation}\label{eq:affine}
x_U(t+1)=B_Sx_U(t)+b_S,
\qquad
B_S:=\Lambda_UW_{UU},
\qquad
b_S:=\Lambda_UW_{US}\one.
\end{equation}

\begin{theorem}[Equilibrium and geometric convergence]\label{thm:convergence}
For every nonempty $S\subseteq V$, the dynamics \eqref{eq:dynamics} have a
unique equilibrium $h^S\in[0,1]^V$, given by
\begin{equation}\label{eq:equilibrium}
h^S_S=\one,
\qquad
h^S_U=(I-B_S)^{-1}b_S.
\end{equation}
Moreover,
$\|x_U(t)-h^S_U\|_\infty
\le\lambda_*^t\|x_U(0)-h^S_U\|_\infty$
for every $t\ge0$. If $x_U(0)=0$, then $x_U(t)$ increases coordinatewise to
$h^S_U$.
\end{theorem}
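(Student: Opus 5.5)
The plan is to treat the non-source coordinates as the affine iteration \eqref{eq:affine} and to exploit that $B_S$ is a strict contraction in the $\infty$-norm. Because $W$ is row-stochastic, for each $i\in U$
\[
\sum_{j\in U}(B_S)_{ij}+(b_S)_i=\lambda_i\sum_{j\in V}w_{ij}=\lambda_i .
\]
Both $B_S$ and $b_S$ are entrywise nonnegative, so every row sum of $B_S$ is at most $\lambda_i\le\lambda_*$, giving $\|B_S\|_\infty\le\lambda_*<1$.

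\textbf{Existence and formula.} From $\|B_S\|_\infty\le\lambda_*$ the Neumann series $\sum_{k\ge0}B_S^k$ converges to $(I-B_S)^{-1}$. The fixed-point equation $x_U=B_Sx_U+b_S$ therefore has the unique solution $h^S_U=(I-B_S)^{-1}b_S$. Setting $h^S_S=\one$ then gives the unique equilibrium, since the source coordinates are forced to equal $1$ after one step.

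\textbf{Range of $h^S$.} The Neumann series also gives $h^S_U=\sum_k B_S^k b_S\ge0$. For the upper bound, let $m=\max_i (h^S_U)_i$, attained at some $i\in U$. Then
\[
m=\sum_j (B_S)_{ij}(h^S_U)_j+(b_S)_i\le \lambda_i\max(m,1)<\max(m,1),
\]
which forces $m<1$. An equivalent route is to note that the map $x_U\mapsto B_Sx_U+b_S$ sends $[0,1]^U$ into itself, because each coordinate is a sub-convex combination with total weight $\lambda_i\le1$. The fixed point obtained below as the limit of iterates started in $[0,1]^U$ must then lie in $[0,1]^U$.

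\textbf{Geometric convergence.} Subtract the fixed-point identity from the iteration to get $x_U(t+1)-h^S_U=B_S\bigl(x_U(t)-h^S_U\bigr)$. Induction with the norm bound then yields
\[
\|x_U(t)-h^S_U\|_\infty\le\lambda_*^t\|x_U(0)-h^S_U\|_\infty .
\]
Here $x_S(t)=\one$ for every $t$, because $x_S(0)=\one$ by hypothesis.

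\textbf{Monotonicity from $x_U(0)=0$.} With this initial condition the iterates are the partial sums $x_U(t)=\sum_{k<t}B_S^k b_S$, whose summands are nonnegative. So the sequence is coordinatewise nondecreasing, and it converges to $h^S_U$ by the previous step.

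The only place needing care is the row-sum bound $\|B_S\|_\infty\le\lambda_*$. It relies on nonnegativity together with the identity that row sums of $B_S$ plus $b_S$ equal $\lambda_i$. Once that is in hand, all four claims follow by routine contraction and Neumann-series arguments. No assumption on connectivity to $S$ is needed, because the discount $\lambda_i<1$ alone makes the map a contraction.
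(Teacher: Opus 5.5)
Your proposal is correct and follows essentially the same route as the paper. The row-sum bound $\|B_S\|_\infty\le\lambda_*<1$ gives invertibility of $I-B_S$ and the contraction estimate, the Neumann series $\sum_{k\ge0}B_S^kb_S$ gives nonnegativity and the monotone increase from $x_U(0)=0$, and invariance of $[0,1]^U$ (or your maximum-principle variant) gives the range, with your partial-sum formula being the closed form of the paper's inductive step $x_U(t+1)-x_U(t)=B_S\bigl(x_U(t)-x_U(t-1)\bigr)\ge0$.
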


\begin{proof}
Since $W_{UU}$ is row-substochastic,
$\|B_S\|_\infty\le\|\Lambda_U\|_\infty\|W_{UU}\|_\infty\le\lambda_*<1$.
Hence $I-B_S$ is invertible, and \eqref{eq:equilibrium} is the unique fixed
point of \eqref{eq:affine}. Subtracting the fixed-point equation gives
$x_U(t)-h^S_U=B_S^t(x_U(0)-h^S_U)$, which yields the stated bound.

The map in \eqref{eq:affine} preserves $[0,1]^U$, and
$h^S_U=\sum_{t\ge0}B_S^tb_S\in[0,1]^U$. If $x_U(0)=0$, then
$x_U(1)=b_S\ge0$, and
$x_U(t+1)-x_U(t)=B_S(x_U(t)-x_U(t-1))\ge0$ inductively. The bounded
monotone sequence therefore converges, and uniqueness of the fixed point
identifies its limit with $h^S_U$.
\end{proof}

Let $(X_t)_{t\ge0}$ be the Markov chain with transition matrix $W$, and let
$T_S:=\inf\{t\ge0:X_t\in S\}$. Empty products are understood to equal one;
on $\{T_S=\infty\}$, the discounted product below is defined to be zero.

\begin{theorem}[Discounted hitting potential]\label{thm:hitting}
For every $i\in V$,
\begin{equation}\label{eq:hitting}
h_i^S
=\mathbb{E}_i\!\left[\prod_{r=0}^{T_S-1}\lambda_{X_r}\right].
\end{equation}
Under homogeneous fidelity $\lambda_i=\lambda$,
\begin{equation}\label{eq:homogeneous-hitting}
h_i^S=\mathbb{E}_i\!\left[\lambda^{T_S}\right].
\end{equation}
\end{theorem}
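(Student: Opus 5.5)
Define $g_i:=\mathbb{E}_i\bigl[\prod_{r=0}^{T_S-1}\lambda_{X_r}\bigr]$, with the product taken to be zero on $\{T_S=\infty\}$. We show that $g$ is a fixed point of the pinned dynamics \eqref{eq:dynamics}. By the uniqueness part of \cref{thm:convergence}, this forces $g=h^S$.

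\emph{Step 1: boundedness.} Each factor $\lambda_{X_r}$ lies in $(0,1)$, so the random product lies in $[0,1]$. Hence $g\in[0,1]^V$ is well defined and no integrability issue arises.

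\emph{Step 2: source vertices.} For $i\in S$ we have $T_S=0$ under $\mathbb{P}_i$. The product is then empty, so $g_i=1$.

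\emph{Step 3: one-step analysis for $i\notin S$.} Here $T_S\ge1$. Define the shifted hitting time $T_S'=\inf\{t\ge0:X_{t+1}\in S\}$, so that $T_S=1+T_S'$. Then
\[
\prod_{r=0}^{T_S-1}\lambda_{X_r}=\lambda_i\prod_{r=0}^{T_S'-1}\lambda_{X_{r+1}}.
\]
This identity also holds on $\{T_S=\infty\}$, where both sides vanish by convention. Condition on $X_1=j$ and apply the Markov property at time one. The shifted functional has the law of the original functional under $\mathbb{P}_j$. This gives
\[
g_i=\lambda_i\sum_j w_{ij}g_j.
\]
For $j\in S$ the term contributes $w_{ij}\cdot 1$. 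In block form, therefore, $g_U=\Lambda_UW_{UU}g_U+\Lambda_UW_{US}\one$.

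\emph{Step 4: conclusion.} Step 3 says $g_U$ satisfies the fixed-point equation of \eqref{eq:affine}. Since $I-B_S$ is invertible, $g_U=(I-B_S)^{-1}b_S=h^S_U$. Together with Step 2, this proves \eqref{eq:hitting}. The homogeneous case \eqref{eq:homogeneous-hitting} follows by substituting $\lambda_{X_r}=\lambda$. The product then becomes $\lambda^{T_S}$, and the convention $\lambda^\infty=0$ matches the convention on $\{T_S=\infty\}$.

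\emph{Alternative route.} One could instead avoid the Markov property and expand the Neumann series $h^S_U=\sum_{t\ge0}B_S^tb_S$. The $(i)$-entry of $B_S^tb_S$ equals $\mathbb{E}_i\bigl[\prod_{r=0}^{t}\lambda_{X_r};\,T_S=t+1\bigr]$, which is a sum over paths that stay in $U$ for $t+1$ steps and then jump into $S$. Summing over $t$ recovers $g_i$, since the event $\{T_S=\infty\}$ contributes zero.

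\emph{Main obstacle.} The only delicate point is handling $\{T_S=\infty\}$ consistently, which matters when some vertices cannot reach $S$. Both routes handle it: the product convention sets the contribution to zero, and uniqueness in \cref{thm:convergence} requires no reachability assumption. Apart from this, the argument is routine.
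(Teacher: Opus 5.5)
Your proof is correct and follows the same route as the paper: show that the expectation equals one on $S$ and satisfies $g_i=\lambda_i\sum_j w_{ij}g_j$ off $S$ by first-step conditioning, then invoke uniqueness of the fixed point from \cref{thm:convergence}. Your Neumann-series alternative is also valid; it gives a path-expansion proof that bypasses the Markov property.
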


\begin{proof}
Let $g_i$ denote the right-hand side of \eqref{eq:hitting}. If $i\in S$, then
$T_S=0$ and $g_i=1$. If $i\notin S$, conditioning on the first transition and
using the Markov property gives
$g_i=\lambda_i\sum_jw_{ij}g_j$. Thus $g$ satisfies the same pinned fixed-point
system as $h^S$, and uniqueness from \cref{thm:convergence} gives $g=h^S$.
\end{proof}

The representation \eqref{eq:hitting} also has a killed-walk interpretation.
At vertex $i$, let the walk survive with probability $\lambda_i$ before making
a $W$-transition, and otherwise move to a cemetery state $\partial$. Then
$h_i^S$ is exactly the probability that the killed walk reaches $S$ before
$\partial$.

\begin{remark}[Identification with known proximity measures]\label{rem:known-potentials}
Under homogeneous fidelity, \eqref{eq:homogeneous-hitting} coincides with
several established random-walk quantities. For a singleton $S=\{q\}$, the
system $h_q=1$ and $h_i=\lambda\sum_jw_{ij}h_j$ for $i\neq q$ is the penalized
hitting probability of Wu, Jin, and Zhang \cite{wu2014,wu2016}, with decay
constant $c=\lambda$. For a general target set, an $\alpha$-terminating walk
that stops before each transition with probability $\alpha$ reaches $S$ before
termination with probability $\mathbb{E}_i[(1-\alpha)^{T_S}]$; setting
$\lambda=1-\alpha$ gives the group hitting probability of Guo et al.\
\cite{guo2024}.

There is also a direct relation with discounted hitting time. Let $R$ be
independent of the walk and satisfy $\mathbb{P}(R>t)=\lambda^t$ for $t\ge0$.
Then
\begin{equation*}
\mathbb{E}_i[\min\{T_S,R\}]
=\sum_{t\ge0}\lambda^t\mathbb{P}_i(T_S>t)
=\frac{1-h_i^S}{1-\lambda}.
\end{equation*}
Thus, under this convention, the discounted hitting time considered in
\cite{sarkar2010} is a decreasing affine function of $h_i^S$.

The potential itself is therefore known. The optimization problem studied here
is different: rather than evaluating a prescribed target set or ranking
candidate targets, we minimize $|S|$ subject to $h_i^S\ge\tau$ for every
$i\in V$.
\end{remark}

For convenience, set $h^{\varnothing}:=0$.

\begin{definition}[Discounted hitting domination]\label{def:dhd}
Fix $\tau\in(0,1]$. A set $S\subseteq V$ is a
\emph{$(\Lambda,W,\tau)$-dominating set} if
$\min_{i\in V}h_i^S\ge\tau$. Its minimum possible cardinality is the
\emph{discounted hitting domination number}
\[
\delta_{\Lambda,W,\tau}
:=\min\{|S|:S\text{ is a }(\Lambda,W,\tau)\text{-dominating set}\}.
\]
For the simple random walk on an undirected graph $G$ with homogeneous
fidelity $\lambda$, we write $\delta_{\lambda,\tau}(G)$ and use the term
\emph{$(\lambda,\tau)$-dominating set}. For homogeneous fidelity on a general
row-stochastic matrix $W$, write
$\delta_{\lambda,W,\tau}:=\delta_{\lambda I,W,\tau}$.
\end{definition}

\begin{remark}[Standing conventions for simple graphs]\label{rem:conventions}
Whenever $\delta_{\lambda,\tau}(G)$ is used for a simple graph $G$, $W$ denotes
the simple-random-walk matrix of $G$. We therefore assume that $G$ has no
isolated vertices. The only exception is $K_1$, for which we set
$\delta_{\lambda,\tau}(K_1)=1$.
\end{remark}

\begin{remark}[Relation to existing models]\label{rem:relation}
The equilibrium system is a pinned, discounted specialization of linear
stubborn-agent dynamics \cite{friedkin1990,ghaderi2014,hunter2022}. The
optimization criterion is different from mean-shift and threshold-count
objectives because every coordinate must satisfy the same floor. It also
differs from finite-horizon random-walk domination \cite{li2014} and
exponential domination \cite{dankelmann2009}: here all walks contribute through
degree-normalized transition probabilities, and the selected set enters through
the first hitting time rather than through a sum of distance weights.
\end{remark}

\begin{example}[A four-agent verification chain]\label{ex:P4}
Let $W$ be the simple-random-walk matrix of the path
$v_1-v_2-v_3-v_4$, take $S=\{v_1\}$, and set $\lambda_i=\lambda=4/5$.
Then \eqref{eq:affine} becomes
\[
\begin{bmatrix}
x_2(t+1)\\
x_3(t+1)\\
x_4(t+1)
\end{bmatrix}
=
\frac45
\begin{bmatrix}
0&1/2&0\\
1/2&0&1/2\\
0&1&0
\end{bmatrix}
\begin{bmatrix}
x_2(t)\\
x_3(t)\\
x_4(t)
\end{bmatrix}
+
\frac45
\begin{bmatrix}
1/2\\
0\\
0
\end{bmatrix}.
\]
Solving \eqref{eq:equilibrium} gives
$h^{\{v_1\}}=(1,34/65,4/13,16/65)$. Thus one verified source guarantees the
floor $\tau=0.24$ but not $\tau=0.25$, although every vertex is reachable from
$v_1$.
\end{example}

The probabilistic, submodular, and mixed-integer results below retain the full
directed heterogeneous model $(W,\Lambda)$. The distance comparisons and the
exact graph-family results specialize to homogeneous fidelity
$\lambda_i=\lambda$ and the simple random walk.

\section{Structural bounds and recovery of distance domination}\label{sec:structural}

Throughout this section, $G=(V,E)$ is a finite simple graph with no isolated
vertices, $W$ is its simple-random-walk matrix, and the fidelity is homogeneous,
$\lambda_i=\lambda\in(0,1)$. Let $\Delta$ denote the maximum degree of $G$.
If a component contains no source, then $h_v^S=0$ throughout that component.
Accordingly, the lower bound below is stated for vertices with
$\dist(v,S)<\infty$; the upper bound remains valid for
$\dist(v,S)=\infty$ under the convention $\lambda^\infty=0$.

\begin{proposition}[Distance bounds]\label{prop:distance-bounds}
Let $\varnothing\neq S\subseteq V$ and let $v\in V$ with
$d=\dist(v,S)<\infty$. Then
\begin{equation}\label{eq:distance-bounds}
\left(\frac{\lambda}{\Delta}\right)^d
\le h_v^S
\le \lambda^d.
\end{equation}
\end{proposition}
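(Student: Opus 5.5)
Both inequalities will come from the hitting representation $h_v^S=\mathbb{E}_v[\lambda^{T_S}]$ of \cref{thm:hitting}, under the convention $\lambda^{T_S}=0$ on $\{T_S=\infty\}$.

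\emph{Upper bound.} The simple random walk moves along edges, so it needs at least $d$ steps to reach $S$ from $v$. Hence $T_S\ge d$ almost surely under $\mathbb{P}_v$. Since $0<\lambda<1$, this gives $\lambda^{T_S}\le\lambda^d$ pointwise, with the value $0$ on $\{T_S=\infty\}$. Taking expectations yields $h_v^S\le\lambda^d$. For $\dist(v,S)=\infty$ the walk never reaches $S$, so $h_v^S=0=\lambda^\infty$, which covers the convention mentioned before the statement.

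\emph{Lower bound.} Fix a shortest path $v=u_0,u_1,\dots,u_d$ with $u_d\in S$. By minimality, $u_0,\dots,u_{d-1}\notin S$. Let $A$ be the event that $X_k=u_k$ for $k=0,\dots,d$. On $A$ we have $T_S=d$, because the earlier vertices of the path are not in $S$. Each step of the walk follows the prescribed edge with probability $1/\deg(u_{k})\ge 1/\Delta$, so $\mathbb{P}_v(A)\ge\Delta^{-d}$. Since $\lambda^{T_S}\ge0$ everywhere,
\[
h_v^S\ \ge\ \mathbb{E}_v[\lambda^{T_S}\mathbf 1_A]=\lambda^d\,\mathbb{P}_v(A)\ \ge\ \left(\frac{\lambda}{\Delta}\right)^d.
\]
The case $d=0$ is immediate, since then $v\in S$ and $h_v^S=1$.

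An alternative route would be an induction on $d$ using the fixed-point equation $h_i=\lambda\sum_j w_{ij}h_j$, keeping only the neighbor of $i$ that lies one step closer to $S$. That argument uses the same ingredients, nonnegativity of $h$ and $w_{ij}\ge1/\Delta$, so I will use the path argument above.

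There is no real obstacle. The only point needing care is to check that $T_S$ equals exactly $d$ on $A$, rather than being smaller. This holds because the path is a shortest path, so none of $u_0,\dots,u_{d-1}$ lies in $S$.
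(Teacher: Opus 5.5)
Your proposal is correct and follows essentially the same approach as the paper. The upper bound comes from $T_S\ge d$ in the representation $h_v^S=\mathbb{E}_v[\lambda^{T_S}]$, and the lower bound restricts to the event that the walk follows a fixed shortest path, which has probability at least $\Delta^{-d}$ and on which $T_S=d$.
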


\begin{proof}
Since $T_S\ge d$, \eqref{eq:homogeneous-hitting} gives
$h_v^S=\mathbb{E}_v[\lambda^{T_S}]\le\lambda^d$.

For the lower bound, fix a shortest path
$v=v_0,v_1,\dots,v_d\in S$. None of $v_0,\dots,v_{d-1}$ lies in $S$.
The walk follows this path during its first $d$ steps with probability
$\prod_{r=0}^{d-1}\deg(v_r)^{-1}\ge\Delta^{-d}$, and on this event
$T_S=d$. Hence $h_v^S\ge\lambda^d\Delta^{-d}$.
\end{proof}

\begin{remark}[Sharpness of the distance bounds]\label{rem:sharpness}
Let $d=\dist(v,S)\ge1$. The upper bound is attained exactly when
$d=1$ and $N(v)\subseteq S$. Indeed, in that case $T_S=1$ almost surely.
Conversely, if $d=1$ and some neighbor of $v$ is not a source, then
$\mathbb{P}_v(T_S>1)>0$, so $h_v^S<\lambda$. If $d\ge2$, every neighbor of
$v$ is a non-source. Choosing any neighbor $u$, the walk returns to $v$ in
two steps with probability at least
$(\deg(v)\deg(u))^{-1}>0$; on that event $T_S\ge d+2$, while always
$T_S\ge d$. Thus $h_v^S<\lambda^d$.

Equality in the lower bound occurs only when $d=1$ and $\Delta=1$.
For $d=1$,
$h_v^S=\frac{\lambda}{\deg(v)}
\bigl(|N(v)\cap S|+\sum_{u\in N(v)\setminus S}h_u^S\bigr)$.
Since every vertex in a component containing a source has positive support,
equality with $\lambda/\Delta$ forces $\deg(v)=\Delta=1$. For $d\ge2$,
there is positive probability that the walk deviates from a fixed shortest
path and later hits $S$, giving strictly positive contribution beyond the
single-path term used in the proof of \cref{prop:distance-bounds}.

The upper estimate is nevertheless asymptotically sharp. Fix $d$ and take
layers $A_0=\{v\},A_1,\dots,A_d$ with $|A_i|=M^i$, join consecutive layers
completely, and set $S=A_d$. From each intermediate layer, the walk advances
toward $S$ with probability $M^2/(1+M^2)$, while its first step from $v$ is
forced forward. Hence
$h_v^S\ge\lambda^d(M^2/(1+M^2))^{d-1}\to\lambda^d$ as $M\to\infty$.

The lower estimate is asymptotically sharp at distance one in the
small-fidelity limit. On $K_{\Delta+1}$ with a single source, symmetry gives
$h_v^S=\lambda/(\Delta-\lambda(\Delta-1))$ for every non-source $v$, and
$h_v^S/(\lambda/\Delta)\to1$ as $\lambda\downarrow0$. No fixed-$\lambda$
sharpness of the lower estimate is asserted.
\end{remark}

Let $\gamma_r(G)$ denote the minimum cardinality of a set $S\subseteq V$ such
that $\dist(v,S)\le r$ for every $v\in V$, with $\gamma_0(G)=|V|$. Define
$r_+(\lambda,\tau):=\lfloor\log\tau/\log\lambda\rfloor$ and
$r_-(\lambda,\tau,\Delta):=
\lfloor\log\tau/\log(\lambda/\Delta)\rfloor$.

\begin{corollary}[Distance-domination sandwich]\label{cor:sandwich}
For every such graph,
\[
\gamma_{r_+(\lambda,\tau)}(G)
\le \delta_{\lambda,\tau}(G)
\le \gamma_{r_-(\lambda,\tau,\Delta)}(G).
\]
\end{corollary}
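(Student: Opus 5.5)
Both inequalities follow directly from the two-sided distance estimate of \cref{prop:distance-bounds}, applied once in each direction. Throughout we use that $\log\lambda<0$, $\log(\lambda/\Delta)<0$ and $\log\tau\le0$, so both $r_+$ and $r_-$ are nonnegative integers. The case $r=0$ is consistent with the convention $\gamma_0(G)=|V|$.

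\emph{Lower bound.} Let $S$ be a minimum $(\lambda,\tau)$-dominating set, so $|S|=\delta_{\lambda,\tau}(G)$.
\begin{itemize}[nosep]
\item $S$ is nonempty, since $h^{\varnothing}=0<\tau$.
\item For every $v\in V$ we have $h_v^S\ge\tau>0$. Hence $\dist(v,S)$ is finite, because $h_v^S=0$ on source-free components.
\item Write $d=\dist(v,S)$. The upper bound in \eqref{eq:distance-bounds} gives $\tau\le h_v^S\le\lambda^d$. Taking logarithms and dividing by $\log\lambda<0$ yields $d\le\log\tau/\log\lambda$.
\item Since $d$ is an integer, $d\le r_+(\lambda,\tau)$.
\end{itemize}
Thus $S$ is a distance-$r_+$ dominating set, and $\gamma_{r_+}(G)\le|S|$.

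\emph{Upper bound.} Let $S$ be a minimum distance-$r_-$ dominating set, where $r_-=r_-(\lambda,\tau,\Delta)$.
\begin{itemize}[nosep]
\item $S$ is nonempty because $V\neq\varnothing$.
\item Every $v\in V$ has $d=\dist(v,S)\le r_-<\infty$.
\item The lower bound in \eqref{eq:distance-bounds} gives $h_v^S\ge(\lambda/\Delta)^d\ge(\lambda/\Delta)^{r_-}$, using $\lambda/\Delta<1$ and $d\le r_-$.
\item From $r_-\le\log\tau/\log(\lambda/\Delta)$, multiplying by the negative number $\log(\lambda/\Delta)$ gives $r_-\log(\lambda/\Delta)\ge\log\tau$. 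Hence $(\lambda/\Delta)^{r_-}\ge\tau$.
\end{itemize}
So $S$ is $(\lambda,\tau)$-dominating, and $\delta_{\lambda,\tau}(G)\le\gamma_{r_-}(G)$.

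The only delicate points are the sign flips when dividing by negative logarithms, and checking that the convention for $\gamma_0$ matches the case $r=0$. The degenerate case $G=K_1$ from \cref{rem:conventions} is handled directly: $\Delta=0$ there, and all three quantities equal $1$, so the chain of inequalities holds trivially.
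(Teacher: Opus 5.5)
Your proof is correct and follows the paper's argument: the upper estimate $h_v^S\le\lambda^{\dist(v,S)}$ makes every $(\lambda,\tau)$-dominating set distance-$r_+$ dominating, and the lower estimate $h_v^S\ge(\lambda/\Delta)^{\dist(v,S)}$ makes every distance-$r_-$ dominating set $(\lambda,\tau)$-dominating. Your closing remark on $K_1$ is unnecessary, since the section assumes $G$ has no isolated vertices. It is also slightly off: with $\Delta=0$ the quantity $r_-$ is not defined, so it is not true that "all three quantities equal $1$."
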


\begin{proof}
If $S$ is a $(\lambda,\tau)$-dominating set, then
$\tau\le h_v^S\le\lambda^{\dist(v,S)}$, so
$\dist(v,S)\le r_+(\lambda,\tau)$ for every $v$. Hence
$\gamma_{r_+}(G)\le|S|$.

Conversely, if $\dist(v,S)\le r_-(\lambda,\tau,\Delta)$ for every $v$, then
\eqref{eq:distance-bounds} gives
$h_v^S\ge(\lambda/\Delta)^{r_-}\ge\tau$. Thus
$\delta_{\lambda,\tau}(G)\le\gamma_{r_-}(G)$.
\end{proof}

When the two distance scales coincide, the bounds become exact.

\begin{theorem}[Exact recovery of distance domination]\label{thm:distance-recovery}
Let $G$ have maximum degree at most $\Delta$, and let $r\ge1$. If
\begin{equation}\label{eq:recovery-window}
\lambda^{r+1}<\tau\le
\left(\frac{\lambda}{\Delta}\right)^r,
\end{equation}
then a set $S$ is $(\lambda,\tau)$-dominating if and only if it is
distance-$r$ dominating. Consequently,
$\delta_{\lambda,\tau}(G)=\gamma_r(G)$. The interval
\eqref{eq:recovery-window} is nonempty exactly when
$\lambda\Delta^r<1$.
\end{theorem}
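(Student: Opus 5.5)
The plan is to prove the two implications separately, each from \cref{prop:distance-bounds}, and then read off the equality of the minima. One subtlety comes first. The proposition is stated with the actual maximum degree $\Delta(G)$, while the theorem only assumes $\Delta(G)\le\Delta$. Since $\lambda/\Delta\le\lambda/\Delta(G)$, the lower bound $(\lambda/\Delta(G))^d$ dominates $(\lambda/\Delta)^d$. So the inequality $h_v^S\ge(\lambda/\Delta)^{\dist(v,S)}$ holds for any admissible $\Delta$, and I can use it directly.

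\emph{Distance-$r$ dominating implies $(\lambda,\tau)$-dominating.} Suppose $\dist(v,S)\le r$ for every $v$. Then $S$ is nonempty and meets every component, so every distance is finite. Since $\lambda/\Delta<1$, the map $d\mapsto(\lambda/\Delta)^d$ is decreasing, and therefore
\[
h_v^S\ge(\lambda/\Delta)^{\dist(v,S)}\ge(\lambda/\Delta)^r\ge\tau .
\]
The case $\dist(v,S)=0$ is trivial, since then $h_v^S=1$.

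\emph{$(\lambda,\tau)$-dominating implies distance-$r$ dominating.} Suppose $h_v^S\ge\tau>0$ for all $v$. A vertex in a component with no source would have support $0$, so every distance $\dist(v,S)$ is finite. If some $v$ had $\dist(v,S)\ge r+1$, the upper bound would give $h_v^S\le\lambda^{r+1}<\tau$, a contradiction. Hence $\dist(v,S)\le r$ for all $v$.

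The two families of sets therefore coincide, so their minimum cardinalities agree and $\delta_{\lambda,\tau}(G)=\gamma_r(G)$. The $K_1$ convention is consistent with this, since both sides equal $1$.

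\emph{Nonemptiness of the window.} The interval is nonempty exactly when $\lambda^{r+1}<\lambda^r\Delta^{-r}$. Dividing by $\lambda^r>0$, this is equivalent to $\lambda<\Delta^{-r}$, that is, $\lambda\Delta^r<1$. When the inequality holds, the upper endpoint $(\lambda/\Delta)^r\le1$, so it is a legitimate value of $\tau\in(0,1]$. I expect no real obstacle. The only points needing care are the degree-bound monotonicity noted at the start and the handling of sourceless components through $\tau>0$.
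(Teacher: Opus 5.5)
Your proof is correct and follows the paper's argument essentially step for step. Both implications come directly from the two sides of \cref{prop:distance-bounds}, and nonemptiness is the same one-line rearrangement. Your two extra points, that the lower bound still holds when $\Delta(G)$ is replaced by any $\Delta\ge\Delta(G)$ and that sourceless components are excluded via $\tau>0$ rather than the paper's convention $\lambda^\infty=0$, are valid refinements of steps the paper leaves implicit.
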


\begin{proof}
The interval is nonempty precisely when
$\lambda^{r+1}<(\lambda/\Delta)^r$, equivalently
$\lambda\Delta^r<1$.

Suppose first that $S$ is distance-$r$ dominating. For every $v$,
$d:=\dist(v,S)\le r$, and \eqref{eq:distance-bounds} gives
$h_v^S\ge(\lambda/\Delta)^d\ge(\lambda/\Delta)^r\ge\tau$.
Thus $S$ is $(\lambda,\tau)$-dominating.

Conversely, if $S$ is not distance-$r$ dominating, then some vertex $v$
satisfies $\dist(v,S)\ge r+1$, possibly with infinite distance. By
\eqref{eq:distance-bounds},
$h_v^S\le\lambda^{\dist(v,S)}\le\lambda^{r+1}<\tau$.
Hence $S$ is not $(\lambda,\tau)$-dominating. The equivalence implies
$\delta_{\lambda,\tau}(G)=\gamma_r(G)$.
\end{proof}

\begin{corollary}[Recovery of ordinary domination]\label{cor:recovery-one}
If $\lambda^2<\tau\le\lambda/\Delta$, then
$\delta_{\lambda,\tau}(G)=\gamma(G)$ for every graph of maximum degree at
most $\Delta$. This interval is nonempty exactly when $\lambda\Delta<1$.
\end{corollary}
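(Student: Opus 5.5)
This is the case $r=1$ of \cref{thm:distance-recovery}, so the plan is to specialize that theorem rather than argue from scratch. Distance-$1$ domination is ordinary domination: $\dist(v,S)\le1$ for every $v$ says exactly that every vertex is in $S$ or adjacent to a vertex of $S$. Hence $\gamma_1(G)=\gamma(G)$. With $r=1$, the window \eqref{eq:recovery-window} becomes $\lambda^2<\tau\le\lambda/\Delta$, which is precisely the hypothesis here. \Cref{thm:distance-recovery} then gives two things at once: a set is $(\lambda,\tau)$-dominating if and only if it is dominating, and $\delta_{\lambda,\tau}(G)=\gamma_1(G)=\gamma(G)$.

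For the nonemptiness claim, the theorem's last sentence with $r=1$ says the interval is nonempty exactly when $\lambda\Delta^1<1$. One can also check this directly. The condition $\lambda^2<\lambda/\Delta$ is equivalent to $\lambda<1/\Delta$, after dividing by $\lambda>0$.

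For a self-contained argument, I would inline the two directions of \cref{prop:distance-bounds}:
\begin{itemize}
\item If $S$ dominates $G$, every non-source $v$ has $\dist(v,S)=1$, so $h_v^S\ge\lambda/\Delta\ge\tau$, while sources have $h_v^S=1\ge\tau$.
\item If $S$ does not dominate $G$, some $v$ has $\dist(v,S)\ge2$, possibly infinite, so $h_v^S\le\lambda^2<\tau$.
\end{itemize}

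The only point needing care is the convention bookkeeping. First, $\gamma_r$ with $r=1$ must match the usual $\gamma(G)$; this holds because $\gamma_0=|V|$ is a separate convention that plays no role here. Second, the standing assumption of no isolated vertices (with the $K_1$ exception) is inherited unchanged from \cref{rem:conventions}. I expect no genuine obstacle.
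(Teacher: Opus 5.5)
Your proposal is correct and takes the same route as the paper, which simply applies \cref{thm:distance-recovery} with $r=1$ and notes that $\gamma_1(G)=\gamma(G)$. Your inlined two-direction argument and the direct check that $\lambda^2<\lambda/\Delta$ if and only if $\lambda\Delta<1$ are accurate, though optional.
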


\begin{proof}
Apply \cref{thm:distance-recovery} with $r=1$, for which
$\gamma_1(G)=\gamma(G)$.
\end{proof}

Outside the recovery window, distance alone does not determine the support.

\begin{example}[Equal distance, unequal support]\label{ex:not-distance}
Let $S$ be one endpoint of $P_3$, and let $u$ be the opposite endpoint.
Solving the equilibrium equations gives
$h_u^S=\lambda^2/(2-\lambda^2)$. Now let $S$ be one leaf of $K_{1,3}$ and
let $u$ be another leaf. Again $\dist(u,S)=2$, but
$h_u^S=\lambda^2/(3-2\lambda^2)<\lambda^2/(2-\lambda^2)$.
Thus vertices at the same distance from a source set can receive different
support. In the star, the hub distributes its weight among three neighbors,
reducing the contribution transmitted from the source.
\end{example}

\begin{figure}[t]
\centering
\begin{tikzpicture}[scale=1.0,
  src/.style={circle,draw,fill=black,inner sep=2.2pt},
  agn/.style={circle,draw,inner sep=2.2pt}]
\node[src,label=below:{$v_1\in S$}] (a1) at (0,0) {};
\node[agn,label=below:{$v_2$}] (a2) at (1.5,0) {};
\node[agn,label=below:{$u$}] (a3) at (3.0,0) {};
\draw (a1)--(a2)--(a3);
\node at (1.5,1.15)
  {$h_u^S=\dfrac{\lambda^2}{2-\lambda^2}$};

\begin{scope}[xshift=7.6cm]
\node[agn,label=right:{$c$}] (c) at (0,0) {};
\node[src,label=left:{$s\in S$}] (s) at (-1.4,0.8) {};
\node[agn,label=right:{$u$}] (u) at (1.4,0.8) {};
\node[agn,label=below:{$w$}] (w) at (0,-1.1) {};
\draw (s)--(c)--(u) (c)--(w);
\node at (0,1.6)
  {$h_u^S=\dfrac{\lambda^2}{3-2\lambda^2}$};
\end{scope}
\end{tikzpicture}
\caption{Two graphs with $\dist(u,S)=2$ but different equilibrium support
(\cref{ex:not-distance}). The degree-three hub in $K_{1,3}$ dilutes the
support transmitted from the source.}
\label{fig:dilution}
\end{figure}

\begin{remark}[Finite-time certification]\label{rem:finite-time}
Suppose the non-source coordinates are initialized at zero. By
\cref{thm:convergence},
$0\le h_i^S-x_i(t)\le\lambda_*^t$ for every $i$. Hence, if
$\min_i h_i^S\ge\tau+\varepsilon$ for some $\varepsilon>0$, then
$x_i(t)\ge\tau$ for all $i$ whenever
$t\ge\lceil\log\varepsilon/\log\lambda_*\rceil$.
\end{remark}
\section{Optimization and computational complexity}\label{sec:optimization}

Define the aggregate support
$F(S):=\sum_{i\in V}h_i^S$ for $S\subseteq V$.

\begin{theorem}[Submodularity]\label{thm:submodular}
For every row-stochastic $W$ and every fidelity matrix $\Lambda$ with
$\lambda_*<1$, the set function $S\mapsto h_i^S$ is normalized, monotone,
and submodular for each $i\in V$. Consequently, $F$ is normalized,
monotone, and submodular.
\end{theorem}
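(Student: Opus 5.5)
We will work from the killed-walk representation that follows \cref{thm:hitting}. On a single probability space, run the chain $(X_t)$ with transition matrix $W$, and independently kill it at each visited vertex $X_r$ with probability $1-\lambda_{X_r}$ before the next transition. Let $\zeta$ be the killing time, i.e.\ the number of transitions completed before death, with $\zeta=\infty$ if the walk is never killed. For any $S$, conditioning on the path gives
\[
h_i^S=\mathbb{P}_i\bigl(T_S<\infty,\ T_S\le\zeta\bigr).
\]
Equivalently, $h_i^S=\mathbb{P}_i(\text{the killed walk visits }S)$. This identity also holds for $S=\varnothing$, since then the event is empty and $h^\varnothing=0$. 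Normalization is therefore immediate.

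Let $R\subseteq V$ be the random set of vertices visited by the killed trajectory $X_0,X_1,\dots,X_\zeta$. Then
\[
h_i^S=\mathbb{P}_i(R\cap S\neq\varnothing)=\mathbb{E}_i\bigl[\one\{R\cap S\neq\varnothing\}\bigr].
\]
The core of the proof is a pathwise statement. For a fixed realization $R$, the coverage indicator $\phi_R(S):=\one\{R\cap S\neq\varnothing\}$ is monotone and submodular in $S$.

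\emph{Monotone.} If $S\subseteq S'$, then $R\cap S\neq\varnothing$ implies $R\cap S'\neq\varnothing$, so $\phi_R(S)\le\phi_R(S')$.

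\emph{Submodular.} Take $S\subseteq S'$ and $v\notin S'$. The marginal gain $\phi_R(S'\cup\{v\})-\phi_R(S')$ equals $1$ exactly when $v\in R$ and $R\cap S'=\varnothing$. Whenever this holds, $R\cap S=\varnothing$ as well, so the marginal gain at $S$ is also $1$. Hence the marginal gain at $S'$ is at most the marginal gain at $S$.

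Taking expectations preserves both properties, because they are linear inequalities between values at fixed sets. This gives monotonicity and submodularity of $S\mapsto h_i^S$. Summing over $i$ preserves them again, which gives the claim for $F$.

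The main obstacle is to justify rigorously that a single random set $R$, whose law does not depend on $S$, represents $h_i^S$ simultaneously for every $S$. The subtle point is that the pinned dynamics stop at $S$, whereas $R$ is the set visited by the unstopped killed walk. To handle this, I would define $\zeta$ through i.i.d.\ uniform variables $U_r$: the walk is killed at step $r$ if $U_r>\lambda_{X_r}$. Then
\[
\mathbb{P}_i(\zeta\ge t\mid X)=\prod_{r=0}^{t-1}\lambda_{X_r},
\]
which recovers \eqref{eq:hitting}, since the event $\{R\cap S\neq\varnothing\}$ coincides with $\{T_S\le\zeta\}$. The convention that the discounted product is zero on $\{T_S=\infty\}$ matches the event being empty in that case. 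As an alternative check, one could verify the fixed-point equation of \cref{thm:convergence} for $g_i(S)=\mathbb{P}_i(R\cap S\neq\varnothing)$ and then invoke uniqueness, mirroring the proof of \cref{thm:hitting}.
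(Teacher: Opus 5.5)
Your proof is correct and is essentially the paper's argument: both write $h_i^S$ as the expectation of a function of $S$ that is monotone and submodular for each fixed realization, and then average. The paper does not introduce the killing variables. It works on the unkilled path and uses the weighted maximum $\prod_{r<T_S}\lambda_{X_r}=\max_{v\in S}a_v(\omega)$, which is exactly $\mathbb{E}[\one\{R\cap S\neq\varnothing\}\mid X]$ under your coupling, so your $0/1$ coverage indicator is the same mechanism with the auxiliary uniforms $U_r$ kept in the sample space.
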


\begin{proof}
Fix $i\in V$ and a sample path
$\omega=(X_0,X_1,\dots)$ of the Markov chain started at $i$. For $v\in V$,
set
$a_v(\omega):=\prod_{r<T_{\{v\}}}\lambda_{X_r}$, with
$a_v(\omega)=0$ if $v$ is never visited. Since the running product
$\prod_{r<t}\lambda_{X_r}$ is nonincreasing in $t$ and
$T_S=\min_{v\in S}T_{\{v\}}$, we have
\[
\prod_{r<T_S}\lambda_{X_r}
=\max_{v\in S}a_v(\omega),
\]
with the maximum over the empty set defined to be zero.

For fixed nonnegative values $(a_v)_{v\in V}$, the function
$S\mapsto\max_{v\in S}a_v$ is normalized and monotone. Its marginal gain
from adding $v$ is
$\bigl(a_v-\max_{u\in S}a_u\bigr)_+$, which is nonincreasing as $S$
grows; hence the function is submodular. Taking expectation with respect
to the walk started at $i$ gives the same three properties for
$S\mapsto h_i^S$. Summing over $i$ proves the assertion for $F$.
\end{proof}

This pathwise maximum representation is consistent with the submodularity
mechanisms appearing in related influence and random-walk objectives
\cite{nemhauser1978,kempe2015,li2014,hunter2022}.

\begin{corollary}[Greedy aggregate-support guarantee]\label{cor:greedy}
For a source budget $k$, the standard greedy algorithm returns a set
$S_k$ satisfying
$F(S_k)\ge(1-e^{-1})\max_{|S|\le k}F(S)$.
The algorithm runs in polynomial time, since it makes polynomially many
evaluations of $F$, each obtained by solving the linear system
\eqref{eq:equilibrium}.
\end{corollary}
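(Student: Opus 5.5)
The plan is to invoke the classical Nemhauser--Wolsey--Fisher theorem \cite{nemhauser1978}: for a normalized, monotone, submodular set function maximized under a cardinality constraint $|S|\le k$, the greedy algorithm achieves a $(1-(1-1/k)^k)\ge 1-e^{-1}$ approximation. All three hypotheses on $F$ are supplied by \cref{thm:submodular}. The greedy algorithm starts from $S_0=\varnothing$ and, for $j=1,\dots,k$, adds a vertex $v\notin S_{j-1}$ that maximizes $F(S_{j-1}\cup\{v\})$. Normalization $F(\varnothing)=0$ holds by the convention $h^{\varnothing}:=0$.

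For completeness, I would recall the core inequality rather than cite it blindly. Let $S^*$ be an optimal set with $|S^*|\le k$. Monotonicity and submodularity give
\[
F(S^*)\le F(S_{j}\cup S^*)\le F(S_j)+\sum_{v\in S^*}\bigl(F(S_j\cup\{v\})-F(S_j)\bigr)\le F(S_j)+k\bigl(F(S_{j+1})-F(S_j)\bigr).
\]
Writing $g_j:=F(S^*)-F(S_j)$, this becomes $g_{j+1}\le(1-1/k)g_j$. Iterating from $g_0=F(S^*)$ yields $g_k\le(1-1/k)^kF(S^*)\le e^{-1}F(S^*)$, which is the claimed bound.

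For the running time, the greedy procedure performs at most $kn\le n^2$ evaluations of $F$. Each evaluation solves the linear system $(I-B_S)h_U=b_S$ from \eqref{eq:equilibrium}, which takes polynomial time by Gaussian elimination. This is well posed because \cref{thm:convergence} shows $I-B_S$ is invertible whenever $S\neq\varnothing$.

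The only point needing care is exactness versus approximation in these evaluations. For rational $W$ and $\Lambda$, the solution is rational, and its bit size is polynomially bounded by Cramer's rule. So exact arithmetic runs in polynomial time, and the argmax comparisons are computed exactly. I expect this to be the main, though mild, obstacle; the approximation argument itself is standard.
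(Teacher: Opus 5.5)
Your proposal is correct and matches the paper's proof, which simply combines \cref{thm:submodular} (normalized, monotone, submodular $F$) with the classical Nemhauser--Wolsey--Fisher greedy bound. Your self-contained derivation of $g_{j+1}\le(1-1/k)g_j$ and your remark on exact rational arithmetic for the linear solves are sound additions that the paper leaves to the citation and to the statement itself.
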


\begin{proof}
By \cref{thm:submodular}, $F$ is normalized, monotone, and submodular.
The result is the classical cardinality-constrained greedy bound of
\cite{nemhauser1978}.
\end{proof}

The uniform floor also has an exact submodular formulation.

\begin{proposition}[Submodular-cover formulation]\label{prop:submodular-cover}
For $\tau\in(0,1]$, define
$Q_\tau(S):=\sum_{i\in V}\min\{h_i^S,\tau\}$.
Then $Q_\tau$ is normalized, monotone, and submodular. Moreover,
$S$ is a $(\Lambda,W,\tau)$-dominating set if and only if
$Q_\tau(S)=Q_\tau(V)=n\tau$. Hence
$\delta_{\Lambda,W,\tau}
=\min\{|S|:Q_\tau(S)=Q_\tau(V)\}$.
\end{proposition}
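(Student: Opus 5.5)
The plan is to reduce everything to \cref{thm:submodular} together with one elementary fact: truncating a monotone submodular function from above by a constant, \emph{coordinatewise before summing}, preserves monotonicity and submodularity. For normalization, $h^{\varnothing}=0$ gives $Q_\tau(\varnothing)=\sum_i\min\{0,\tau\}=0$.

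For monotonicity, each $S\mapsto h_i^S$ is monotone by \cref{thm:submodular}, and $x\mapsto\min\{x,\tau\}$ is nondecreasing. Hence each summand $g_i(S):=\min\{h_i^S,\tau\}$ is monotone, and so is $Q_\tau$.

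Submodularity of each $g_i$ is the main step. Writing $f=h_i$, I would verify the diminishing-returns inequality $g(A\cup\{v\})-g(A)\ge g(B\cup\{v\})-g(B)$ for $A\subseteq B$ and $v\notin B$. I would use the general fact that $\phi\circ f$ is submodular when $f$ is monotone submodular and $\phi$ is nondecreasing and concave. Here $\phi(x)=\min\{x,\tau\}$ qualifies. The argument runs as follows. Set $a=f(A)\le b=f(B)$ by monotonicity. Let $\alpha=f(A\cup\{v\})-a$ and $\beta=f(B\cup\{v\})-b$. Submodularity of $f$ gives $\alpha\ge\beta\ge0$. Then
\[
\phi(a+\alpha)-\phi(a)\ge\phi(a+\beta)-\phi(a)\ge\phi(b+\beta)-\phi(b).
\]
The first inequality holds because $\phi$ is nondecreasing. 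The second holds because a concave function has nonincreasing increments of fixed length $\beta$ as the base point moves right from $a$ to $b$. This is the only place any care is needed, and it is a standard one-line concavity argument. Summing over $i$ then gives submodularity of $Q_\tau$.

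For the characterization, note first that $h_i^V=1\ge\tau$ for all $i$, since every vertex is pinned. Therefore $Q_\tau(V)=n\tau$. For any $S$ we have $Q_\tau(S)\le n\tau$, with equality if and only if every term $\min\{h_i^S,\tau\}$ equals $\tau$, that is, $h_i^S\ge\tau$ for all $i$. This is precisely the definition of a $(\Lambda,W,\tau)$-dominating set in \cref{def:dhd}. Taking minimum cardinality over such sets yields $\delta_{\Lambda,W,\tau}=\min\{|S|:Q_\tau(S)=Q_\tau(V)\}$. The minimum is attained because $V$ itself is feasible.
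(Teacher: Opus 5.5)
Your proof is correct and follows the paper's structure: apply \cref{thm:submodular} coordinatewise, show that truncation at $\tau$ preserves monotonicity and submodularity, sum over $i$, and use $h^V=\one$ to show that feasibility is equivalent to $Q_\tau(S)=n\tau$. The only difference is the truncation step. You invoke the general fact that a nondecreasing concave function of a monotone submodular function is submodular. The paper instead argues directly by cases on whether $f(T)\ge\tau$, writing the marginal gain as $\min\{f(R\cup\{v\})-f(R),\,\tau-f(R)\}$ when $f(T)<\tau$. Both arguments are valid and equally short.
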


\begin{proof}
It suffices to note that truncation at a constant preserves monotonicity
and submodularity for a nonnegative monotone submodular function. For
completeness, let $f$ have these properties and set
$g(S):=\min\{f(S),\tau\}$. For $S\subseteq T$ and $v\notin T$, if
$f(T)\ge\tau$, then the marginal gain at $T$ is zero and the desired
inequality is immediate. If $f(T)<\tau$, then also $f(S)<\tau$, and
\[
g(R\cup\{v\})-g(R)
=
\min\{f(R\cup\{v\})-f(R),\,\tau-f(R)\},
\qquad R\in\{S,T\}.
\]
Both terms inside the minimum are no larger for $R=T$ than for $R=S$:
the first by submodularity and the second by monotonicity. Thus $g$ is
submodular.

Applying this to each $S\mapsto h_i^S$ and summing shows that $Q_\tau$
is normalized, monotone, and submodular. Since each summand is at most
$\tau$, equality $Q_\tau(S)=n\tau$ holds exactly when
$h_i^S\ge\tau$ for every $i$. Finally, $h^V=\one$, so
$Q_\tau(V)=n\tau$.
\end{proof}

Thus discounted hitting domination is an exact submodular-cover problem.
The corresponding greedy rule will be used as a computational baseline in
\cref{sec:experiments}; see \cite{wolsey1982} for the classical analysis of
greedy submodular cover. An exact mixed-integer formulation is also
available.

\begin{proposition}[Exact mixed-integer linear formulation]\label{prop:milp}
For every row-stochastic $W$, every fidelity matrix $\Lambda$ with
$\lambda_*<1$, and every $\tau\in(0,1]$,
$\delta_{\Lambda,W,\tau}$ is the optimal value of
\begin{equation}\label{eq:milp}
\min\left\{
\sum_{i\in V}y_i:
\ \tau\le h_i\le1,\quad
y_i\le h_i,\quad
0\le h_i-\lambda_i\sum_{j\in V}w_{ij}h_j\le y_i,\quad
y_i\in\{0,1\}
\ \text{for all }i\in V
\right\}.
\end{equation}
More precisely, $S\mapsto(\one_S,h^S)$ sends
$(\Lambda,W,\tau)$-dominating sets to feasible solutions of
\eqref{eq:milp}, while $(y,h)\mapsto\{i:y_i=1\}$ sends feasible solutions
to $(\Lambda,W,\tau)$-dominating sets. Both maps preserve cardinality and
objective value.
\end{proposition}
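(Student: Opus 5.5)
The plan is to verify the two maps separately and then conclude equality of optimal values. Since both maps preserve cardinality and objective value, the optimal value of \eqref{eq:milp} will equal $\delta_{\Lambda,W,\tau}$ once each map is shown to land in the claimed set. The objective $\sum_i y_i$ is exactly $|S|$ under either correspondence.

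\emph{Forward direction.} Let $S$ be a $(\Lambda,W,\tau)$-dominating set, and put $y=\one_S$ and $h=h^S$. By \cref{thm:convergence}, $h^S\in[0,1]^V$, and by hypothesis $h_i^S\ge\tau$, so $\tau\le h_i\le1$. For $i\in S$ we have $h_i=1$, so $y_i=1\le h_i$. Also $h_i-\lambda_i\sum_j w_{ij}h_j=1-\lambda_i\sum_jw_{ij}h_j$, which lies in $[0,1]$ because $\sum_jw_{ij}h_j\le1$ and $\lambda_i<1$; it is therefore at most $y_i=1$. For $i\notin S$, the fixed-point equation \eqref{eq:equilibrium} gives $h_i-\lambda_i\sum_jw_{ij}h_j=0$, and $y_i=0\le h_i$ holds trivially. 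Hence the pair is feasible.

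\emph{Reverse direction.} This is the main step. Let $(y,h)$ be feasible and set $S=\{i:y_i=1\}$. For $i\notin S$, the constraint forces $h_i=\lambda_i\sum_jw_{ij}h_j$. For $i\in S$, the constraints $y_i\le h_i\le 1$ force $h_i=1$. Thus $h$ satisfies the pinned fixed-point system of \cref{def:dynamics}, with $h_S=\one$ and $h_U=B_Sh_U+b_S$.

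If $S\neq\varnothing$, the uniqueness in \cref{thm:convergence} gives $h=h^S$. Then $h^S_i=h_i\ge\tau$ for all $i$, so $S$ is dominating.

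The obstacle is excluding $S=\varnothing$. In that case $h=\Lambda Wh$ with $\|\Lambda W\|_\infty\le\lambda_*<1$. Taking sup norms gives $\|h\|_\infty\le\lambda_*\|h\|_\infty$, so $h=0$, which contradicts $h_i\ge\tau>0$. Hence every feasible solution has $S\neq\varnothing$, consistent with the convention $h^{\varnothing}=0$.

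\emph{Conclusion.} Both maps preserve $\sum_iy_i=|S|$. The minimum of \eqref{eq:milp} is therefore at most $\delta_{\Lambda,W,\tau}$ by the forward map and at least $\delta_{\Lambda,W,\tau}$ by the reverse map, so the two are equal. The optimum of \eqref{eq:milp} is attained because $y$ ranges over a finite set and, for each $y$, $h$ is determined uniquely. This last point is also what makes the correspondence exact.
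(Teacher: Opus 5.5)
Your proposal is correct and follows the paper's proof essentially step for step. Both arguments send $S$ to $(\one_S,h^S)$ via the pinned fixed-point equations, force $h_i=1$ on $S$ and $h_i=\lambda_i\sum_j w_{ij}h_j$ off $S$, rule out $S=\varnothing$ using $\|\Lambda W\|_\infty\le\lambda_*<1$, and conclude with uniqueness from \cref{thm:convergence}. One small slip in the forward direction: the bound $1-\lambda_i\sum_j w_{ij}h_j\le 1$ comes from $h\ge 0$, whereas $\sum_j w_{ij}h_j\le1$ and $\lambda_i<1$ give the lower bound (indeed $\ge 1-\lambda_i>0$, as the paper notes).
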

\begin{proof}
Let $S$ be a $(\Lambda,W,\tau)$-dominating set and take
$y=\one_S$, $h=h^S$. Then $\tau\le h_i\le1$ for every $i$. If
$i\notin S$, then $y_i=0$ and the fixed-point equation gives
$h_i-\lambda_i\sum_jw_{ij}h_j=0$. If $i\in S$, then
$y_i=h_i=1$, while
$0<1-\lambda_i\le
h_i-\lambda_i\sum_jw_{ij}h_j\le1$.
Thus $(y,h)$ is feasible and its objective value is $|S|$.

Conversely, let $(y,h)$ be feasible and put
$S:=\{i:y_i=1\}$. If $i\in S$, then
$1=y_i\le h_i\le1$, so $h_i=1$. If $i\notin S$, then $y_i=0$ and
the two-sided constraint gives
$h_i=\lambda_i\sum_jw_{ij}h_j$. The set $S$ cannot be empty: otherwise
$h=\Lambda Wh$, and
$\|\Lambda W\|_\infty\le\lambda_*<1$ would force $h=0$, contradicting
$h_i\ge\tau>0$. Hence $h$ satisfies the pinned fixed-point equations for
$S$, and uniqueness in \cref{thm:convergence} gives $h=h^S$. Therefore
$S$ is $(\Lambda,W,\tau)$-dominating, and the objective value is
$\sum_i y_i=|S|$.
\end{proof}

The formulation \eqref{eq:milp} has $n$ binary and $n$ continuous
variables and $3n$ linear constraints apart from variable bounds. It is
exact, not a relaxation, and is used in \cref{sec:experiments} to certify
the reported optimal source cardinalities.

\begin{problem}[\textsc{Discounted Hitting Domination}]\label{prob:decision}
For fixed rational $\lambda,\tau\in(0,1)$, given a graph $G$ with no
isolated vertices and an integer $k$, decide whether
$\delta_{\lambda,\tau}(G)\le k$.
\end{problem}

\begin{theorem}[Complexity at fixed parameters]\label{thm:hardness}
For $(\lambda,\tau)=(1/4,1/14)$,
{\upshape\textsc{Discounted Hitting Domination}} is NP-complete even on
graphs of maximum degree three. The corresponding minimum-cardinality
optimization problem is APX-complete on the same graph class.
\end{theorem}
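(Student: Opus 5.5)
Check the window: with $\lambda=1/4$, $\Delta=3$, $r=1$ we have $\lambda^{2}=1/16<1/14$ and $\lambda/\Delta=1/12>1/14$, so \eqref{eq:recovery-window} holds. \Cref{cor:recovery-one} then gives that, on every graph of maximum degree at most three, a set is $(1/4,1/14)$-dominating if and only if it is a dominating set. Hence $\delta_{1/4,1/14}(G)=\gamma(G)$ on this class. The plan is to transfer known complexity results for \textsc{Dominating Set} on cubic or subcubic graphs through this identity, with the identity map on instances serving as the reduction.

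\textbf{Membership in NP.} A certificate is a set $S$ with $|S|\le k$. Verification solves the rational linear system \eqref{eq:equilibrium}, which can be done in polynomial time by Gaussian elimination with polynomial bit size, and then checks $h_i^S\ge\tau$ for all $i$. On degree-at-most-three inputs one can check domination directly instead, since the two notions coincide there. For general inputs the linear-system check suffices, so the problem lies in NP.

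\textbf{NP-hardness.} Use the classical NP-completeness of \textsc{Dominating Set} on graphs of maximum degree three (Garey--Johnson; it holds even for planar cubic graphs). Such graphs may contain isolated vertices, but adding a pendant edge to an isolated vertex changes $\gamma$ by exactly one per isolated vertex, or one can simply cite the cubic case, which has none. Map $(G,k)\mapsto(G,k)$; correctness follows from \cref{cor:recovery-one}.

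\textbf{APX-completeness.} For membership, domination on graphs of maximum degree $\Delta$ admits a constant-factor approximation: any maximal independent set, or the greedy set-cover bound $H(\Delta+1)$, gives ratio at most $4$ for $\Delta=3$. Since feasible sets coincide, the same algorithm approximates $\delta_{1/4,1/14}$. For hardness, invoke APX-hardness of minimum dominating set on graphs of maximum degree three (Alimonti--Kann, 2000). The identity reduction preserves feasible solutions and their costs, so it is an L-reduction with constants $1$.

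\textbf{Main obstacle.} The delicate point is matching the exact hypotheses of the cited hardness results with the no-isolated-vertex convention of \cref{rem:conventions}. I would first try to cite hardness directly for cubic graphs, which are $3$-regular and connected without loss of generality. If only the subcubic version is available, I would handle isolated vertices by deleting them and adding their number to both $k$ and the cost, which is an approximation-preserving additive shift, and argue that it keeps the L-reduction constants bounded since $\gamma\ge$ that number.
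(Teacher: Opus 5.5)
Your proposal is correct and follows the paper's proof: you verify $1/16<1/14\le 1/12$ and apply \cref{cor:recovery-one} so that feasible sets coincide with dominating sets on subcubic graphs, then transfer NP- and APX-completeness of dominating set on cubic graphs (Alimonti--Kann) through the identity map, and check NP membership by exact Gaussian elimination. Your explicit constant-factor bound for APX membership on subcubic graphs fills in a step the paper leaves implicit; there is one harmless slip in your isolated-vertex fallback (replacing an isolated vertex by a $K_2$ leaves $\gamma$ unchanged rather than increasing it by one), and it does not matter because the cubic case, which has no isolated vertices, already suffices.
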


\begin{proof}
Let $G$ have maximum degree at most three. Since
$1/16<1/14\le1/12$, we have
$\lambda^2<\tau\le\lambda/3$ at
$(\lambda,\tau)=(1/4,1/14)$. By
\cref{cor:recovery-one}, a set $S\subseteq V(G)$ is
$(1/4,1/14)$-dominating if and only if it is a classical dominating set.
Consequently,
$\delta_{1/4,1/14}(G)=\gamma(G)$, and the feasible solutions coincide
with identical objective values.

Thus the identity map gives an objective-preserving reduction between
Minimum Dominating Set and the present problem on this graph class.
Minimum Dominating Set is NP-complete and APX-complete already on cubic
graphs \cite{alimonti2000}; hence the optimization problem above is
APX-complete and the decision problem is NP-hard on graphs of maximum
degree three.

It remains to verify membership in NP. Given a proposed source set $S$,
the equilibrium is the solution of the rational linear system
\eqref{eq:equilibrium}. Exact Gaussian elimination runs in polynomial time
in the bit model, after which each coordinate can be compared with the
fixed rational threshold $\tau$. Thus feasibility can be verified in
polynomial time.
\end{proof}

\begin{remark}\label{rem:hardness-transfer}
The argument is not specific to ordinary domination. Whenever the parameters
lie in the recovery window of \cref{thm:distance-recovery}, any complexity
or approximation result for minimum distance-$r$ domination on a
bounded-degree graph class transfers directly to discounted hitting
domination, because the two problems have the same feasible sets and
objective values on that class. The spider results of \cref{sec:spiders}
identify a structured family for which the discounted problem is instead
polynomial-time solvable.
\end{remark}
\section{Transfer formulas and admissible scales}\label{sec:attenuation}

Throughout this section, $W$ is the simple-random-walk matrix and the fidelity
is homogeneous, $\lambda_i=\lambda\in(0,1)$. Set
$\mu:=\arcosh(1/\lambda)>0$, so that $\lambda=\sech\mu$.

Because sources are pinned, the equilibrium equations decouple across the
source-free path segments determined by $S$. Two boundary-value problems are
needed for spiders: a finite segment terminating at a source, and a terminal
tail ending at a degree-one vertex. We first record the transfer sequences
common to both. Define
\begin{equation}\label{eq:pq-recurrence}
p_0=0,\qquad p_1=1,\qquad
q_0=1,\qquad q_1=\frac1\lambda,
\qquad
y_{k+1}=\frac{2}{\lambda}y_k-y_{k-1}
\quad (y\in\{p,q\}).
\end{equation}

\begin{lemma}[Transfer sequences]\label{lem:transfer-sequences}
For every $k\ge0$,
$p_k=\sinh(k\mu)/\sinh\mu$ and $q_k=\cosh(k\mu)$. Both sequences are
strictly increasing, with $p_k>0$ for $k\ge1$ and $q_k\ge1$. Moreover, for
$1\le j\le m-1$,
\begin{equation}\label{eq:pq-identity}
\frac{p_{m-j}+p_j}{p_m}
=
\frac{\cosh((j-m/2)\mu)}{\cosh(m\mu/2)}.
\end{equation}
If $\lambda\in\Q$, then every $p_k$ and $q_k$ is rational and can be computed
exactly from \eqref{eq:pq-recurrence}.
\end{lemma}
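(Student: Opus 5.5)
The plan is to verify the closed forms by induction from the recurrence \eqref{eq:pq-recurrence}, using the hyperbolic addition formula
\[
\cosh((k+1)\mu)+\cosh((k-1)\mu)=2\cosh\mu\cosh(k\mu)
\]
together with its $\sinh$ analogue. Since $\lambda=\sech\mu$, the recurrence coefficient $2/\lambda$ equals $2\cosh\mu$.

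\textbf{Base cases.} For $p$ we have $\sinh 0/\sinh\mu=0$ and $\sinh\mu/\sinh\mu=1$. For $q$ we have $\cosh 0=1$ and $\cosh\mu=1/\lambda$.

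\textbf{Inductive step.} Both $\sinh(k\mu)$ and $\cosh(k\mu)$ satisfy
\[
y_{k+1}=2\cosh\mu\, y_k-y_{k-1}.
\]
This is exactly \eqref{eq:pq-recurrence}, and the recurrence together with two initial values determines each sequence uniquely. Hence $p_k=\sinh(k\mu)/\sinh\mu$ and $q_k=\cosh(k\mu)$ for all $k\ge0$.

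\textbf{Monotonicity and sign.} Because $\mu>0$, the functions $x\mapsto\sinh(x\mu)$ and $x\mapsto\cosh(x\mu)$ are strictly increasing on $[0,\infty)$. Therefore both sequences are strictly increasing, $p_k>0$ for $k\ge1$, and $q_k\ge q_0=1$.

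\textbf{The identity \eqref{eq:pq-identity}.} The factors $1/\sinh\mu$ cancel, so it suffices to prove
\[
\frac{\sinh((m-j)\mu)+\sinh(j\mu)}{\sinh(m\mu)}=\frac{\cosh((j-m/2)\mu)}{\cosh(m\mu/2)}.
\]
Apply the sum-to-product formula with $a=(m-j)\mu$ and $b=j\mu$:
\[
\sinh a+\sinh b=2\sinh\!\bigl(\tfrac{a+b}{2}\bigr)\cosh\!\bigl(\tfrac{a-b}{2}\bigr)=2\sinh(m\mu/2)\cosh((m/2-j)\mu).
\]
For the denominator use the double-angle formula $\sinh(m\mu)=2\sinh(m\mu/2)\cosh(m\mu/2)$. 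Cancel $2\sinh(m\mu/2)$, which is nonzero since $m\ge2$, and use that $\cosh$ is even to obtain the claimed right-hand side.

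\textbf{Rationality.} If $\lambda\in\Q$, then $2/\lambda\in\Q$ and the initial values $0,1,1,1/\lambda$ are rational. The recurrence \eqref{eq:pq-recurrence} therefore produces rational $p_k$ and $q_k$, computable with exact arithmetic.

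No step is a real obstacle. The only point needing care is that $\lambda=\sech\mu$ turns the coefficient $2/\lambda$ into $2\cosh\mu$, after which the argument is purely mechanical.
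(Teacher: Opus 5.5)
Your proposal is correct and follows essentially the same route as the paper. Both arguments rewrite $2/\lambda$ as $2\cosh\mu$ so that $\sinh(k\mu)$ and $\cosh(k\mu)$ solve the recurrence, match the initial values, read off monotonicity from $\sinh$ and $\cosh$, obtain \eqref{eq:pq-identity} from the sum-to-product and double-angle formulas, and get rationality directly from the recurrence.
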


\begin{proof}
Since $1/\lambda=\cosh\mu$, the recurrence in
\eqref{eq:pq-recurrence} is
$y_{k+1}-2\cosh(\mu)y_k+y_{k-1}=0$. Its solutions are linear combinations
of $\cosh(k\mu)$ and $\sinh(k\mu)$, and the initial conditions give the
stated formulas for $p_k$ and $q_k$. Their monotonicity follows from that of
$\sinh$ and $\cosh$ on $[0,\infty)$.

For \eqref{eq:pq-identity}, use
$\sinh((m-j)\mu)+\sinh(j\mu)
=2\sinh(m\mu/2)\cosh((j-m/2)\mu)$ and
$\sinh(m\mu)=2\sinh(m\mu/2)\cosh(m\mu/2)$.
Rationality follows directly from the recurrence.
\end{proof}

\begin{remark}[Exact integer arithmetic]\label{rem:integer}
Suppose $\lambda=c/e$ in lowest terms, with $0<c<e$. Define
$\widetilde p_0:=0$,
$\widetilde p_k:=c^{k-1}p_k$ for $k\ge1$, and
$\widetilde q_k:=c^kq_k$ for $k\ge0$. Then both scaled sequences satisfy
$\widetilde y_{k+1}=2e\,\widetilde y_k-c^2\widetilde y_{k-1}$, with
$\widetilde p_1=1$, $\widetilde q_0=1$, and $\widetilde q_1=e$.
Thus $\widetilde p_k$ and $\widetilde q_k$ are positive integers for
$k\ge1$, of bit-size $O(k\log e)$.

If $\tau=s/t$ is rational in lowest terms, all transfer quantities used below,
including \eqref{eq:theta}, \eqref{eq:LB}, and the leg data in
\eqref{eq:Fdata}--\eqref{eq:Ndata}, are rational with polynomial bit-size.
Their comparisons may therefore be performed exactly by cross-multiplication,
without numerical rounding.
\end{remark}

\begin{lemma}[Two-boundary segment]\label{lem:two-boundary}
Let $m\ge1$ and $x\in[0,1]$. Suppose $z_0,\dots,z_m$ satisfy
$z_0=x$, $z_m=1$, and
$z_j=\frac{\lambda}{2}(z_{j-1}+z_{j+1})$ for $1\le j\le m-1$.
Then the solution is unique and
\begin{equation}\label{eq:two-boundary}
z_j(x;m)=\frac{p_{m-j}x+p_j}{p_m},
\qquad 0\le j\le m.
\end{equation}
Each $z_j(x;m)$ is nondecreasing in $x$, and
$z_1(x;m)=A_mx+C_m$, where
$A_m:=p_{m-1}/p_m\in[0,1)$ and $C_m:=1/p_m$.
In particular, $A_1=0$ and $A_m>0$ for $m\ge2$.

For $m\ge2$, all interior values satisfy $z_j\ge\tau$ if and only if
$x\ge\vartheta_m$, where
\begin{equation}\label{eq:theta}
\vartheta_m
:=
\max\left\{
0,\,
\max_{1\le j\le m-1}
\frac{\tau p_m-p_j}{p_{m-j}}
\right\},
\end{equation}
and we set $\vartheta_1:=0$.
\end{lemma}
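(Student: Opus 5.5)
Since the interior equations are a second-order linear recurrence, I would check the formula \eqref{eq:two-boundary} directly and get uniqueness from the boundary conditions. Both $(p_{m-j})_j$ and $(p_j)_j$ satisfy $z_j=\frac{\lambda}{2}(z_{j-1}+z_{j+1})$, which is just \eqref{eq:pq-recurrence} rewritten as $z_{j+1}=\frac{2}{\lambda}z_j-z_{j-1}$; for the reversed sequence this uses the symmetry of the recurrence under $k\mapsto m-k$. Hence so does every linear combination of them. Using $p_0=0$, the candidate gives $z_0=p_mx/p_m=x$ and $z_m=p_m/p_m=1$.

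For uniqueness I see two routes. One is that the difference of two solutions vanishes at both ends, and running the recurrence forward from $z_0=0$ makes it a multiple of $p_j$; then $p_m>0$ from \cref{lem:transfer-sequences} forces it to vanish. The other is to note that the system is the pinned equilibrium on the path $P_{m+1}$, with a rescaled boundary value, and appeal to invertibility of $I-B_S$ from \cref{thm:convergence}. I would use the first, since it is self-contained.

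Monotonicity in $x$ follows because the coefficient $p_{m-j}/p_m$ is nonnegative, as $p_k\ge0$. Setting $j=1$ gives $z_1=(p_{m-1}x+1)/p_m$, so $A_m=p_{m-1}/p_m$ and $C_m=1/p_m$. Strict monotonicity of $p_k$ gives $A_m<1$, and $p_0=0$ gives $A_1=0$. For $m\ge2$ we have $p_{m-1}\ge p_1=1>0$, so $A_m>0$.

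For the threshold characterization with $m\ge2$, take $1\le j\le m-1$. Then $m-j\ge1$, so $p_{m-j}>0$, and $z_j\ge\tau$ becomes, after multiplying by $p_m>0$,
\[
x\ge\frac{\tau p_m-p_j}{p_{m-j}}.
\]
Intersecting over $j$ shows that all interior values are at least $\tau$ exactly when $x$ is at least the maximum of these ratios. Because $x$ ranges over $[0,1]$, taking the further maximum with $0$ does not change the condition, and this is why $\vartheta_m$ is defined as in \eqref{eq:theta}. For $m=1$ there are no interior vertices, so the condition holds vacuously and the convention $\vartheta_1=0$ matches it.

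The only point needing care is the division by $p_{m-j}$, which requires $j\le m-1$, and this is exactly the range in \eqref{eq:theta}. Apart from that, every step follows from \cref{lem:transfer-sequences}.
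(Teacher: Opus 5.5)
Your proof is correct and follows the paper's argument essentially step for step: you verify the formula via the recurrence and $p_0=0$, get monotonicity from $p_{m-j}\ge0$, get the bounds on $A_m$ from strict increase of $(p_k)$, and obtain the threshold by dividing by $p_{m-j}>0$ for $1\le j\le m-1$. The only variation is uniqueness. The paper uses strict diagonal dominance of the interior system (off-diagonal row sums at most $\lambda<1$), and reuses that argument verbatim for the reflecting tail. Your shooting argument, which shows a difference of solutions equals $d_1p_j$ and then uses $p_m>0$, is equally valid and self-contained.
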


\begin{proof}
Formula \eqref{eq:two-boundary} satisfies the recurrence by
\cref{lem:transfer-sequences} and has the prescribed boundary values because
$p_0=0$. Uniqueness follows from strict diagonal dominance of the interior
linear system: its diagonal entries are one and its off-diagonal row sums are
at most $\lambda<1$.

Monotonicity in $x$ follows from $p_{m-j}\ge0$, while
$0\le A_m<1$ follows from $p_0=0$ and
$p_{m-1}<p_m$ for $m\ge2$. Finally,
$z_j(x;m)\ge\tau$ is equivalent to
$p_{m-j}x\ge\tau p_m-p_j$. Since $p_{m-j}>0$ for
$1\le j\le m-1$, imposing these inequalities simultaneously gives exactly
$x\ge\vartheta_m$.
\end{proof}

\begin{lemma}[Reflecting tail]\label{lem:tail}
Let $b\ge1$ and $x\in[0,1]$. Suppose $z_0,\dots,z_b$ satisfy
$z_0=x$,
$z_j=\frac{\lambda}{2}(z_{j-1}+z_{j+1})$ for $1\le j\le b-1$, and
$z_b=\lambda z_{b-1}$. Then the solution is unique and
\begin{equation}\label{eq:tail-solution}
z_j=x\,\frac{q_{b-j}}{q_b},
\qquad 0\le j\le b.
\end{equation}
The sequence is nonincreasing in $j$, and
$z_1(x;b)=A_b^0x$ with $A_b^0:=q_{b-1}/q_b\in(0,1)$.
Moreover,
$\min_{1\le j\le b}z_j\ge\tau$ if and only if $x\ge\tau q_b$.
\end{lemma}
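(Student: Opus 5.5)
The plan is to verify the candidate formula directly, argue uniqueness exactly as in \cref{lem:two-boundary}, and then read off monotonicity, the first-step coefficient, and the threshold criterion.

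\emph{Verification of the formula.} Set $z_j:=x\,q_{b-j}/q_b$. For $1\le j\le b-1$, the interior equation $z_j=\frac{\lambda}{2}(z_{j-1}+z_{j+1})$ becomes $2q_{b-j}=\lambda(q_{b-j+1}+q_{b-j-1})$. With $k=b-j$, this is the recurrence \eqref{eq:pq-recurrence} for $q$, read at index $k$ with $1\le k\le b-1$. The boundary condition $z_0=x$ holds trivially. The reflecting condition $z_b=\lambda z_{b-1}$ reads $q_0=\lambda q_1$, which holds because $q_0=1$ and $q_1=1/\lambda$.

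\emph{Uniqueness.} The unknowns are $z_1,\dots,z_b$, and the system has unit diagonal. In each interior row the off-diagonal entries sum to at most $\lambda$. In the last row the only off-diagonal entry is $\lambda$. So the system is strictly diagonally dominant and hence nonsingular.

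\emph{Monotonicity and the coefficient $A_b^0$.} By \cref{lem:transfer-sequences}, $q_k$ is strictly increasing in $k$ and satisfies $q_k\ge1$. Since $x\ge0$, the sequence $z_j$ is therefore nonincreasing in $j$. Setting $j=1$ gives $z_1=A_b^0x$ with $A_b^0=q_{b-1}/q_b$. This ratio is positive because $q_{b-1}\ge1$, and it is less than $1$ because $q$ is strictly increasing.

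\emph{Threshold criterion.} Because $z_j$ is nonincreasing, $\min_{1\le j\le b}z_j=z_b=x/q_b$, using $q_0=1$. Hence the minimum is at least $\tau$ exactly when $x\ge\tau q_b$.

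There is no genuine obstacle here. The only point needing care is the index shift when matching the interior equation to the $q$-recurrence, and checking that the reflecting endpoint corresponds precisely to $q_0=\lambda q_1$.
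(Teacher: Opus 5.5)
Your proposal is correct and follows essentially the same route as the paper's proof: you verify the formula through the $q$-recurrence together with the endpoint identity $q_0=\lambda q_1$, obtain uniqueness from strict diagonal dominance, and use that $q_k$ is strictly increasing to identify the minimum as $z_b=x/q_b$. The one addition is that you explicitly justify $A_b^0=q_{b-1}/q_b\in(0,1)$, which the paper states in the lemma but does not argue in its proof.
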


\begin{proof}
The formula in \eqref{eq:tail-solution} satisfies the interior recurrence by
\cref{lem:transfer-sequences}. At the endpoint,
$\lambda z_{b-1}
=\lambda xq_1/q_b
=x/q_b
=z_b$, since $q_1=1/\lambda$.
Uniqueness again follows from strict diagonal dominance; the final row has
off-diagonal coefficient $\lambda<1$.

Because $(q_k)$ is increasing, $z_j$ is nonincreasing in $j$, and its minimum
is $z_b=x/q_b$. Hence $\min_j z_j\ge\tau$ exactly when
$x\ge\tau q_b$.
\end{proof}

Setting $x=1$ yields the two extremal profiles used below. For a gap of
$m$ edges between two sources, \eqref{eq:pq-identity} shows that the minimum
interior value occurs at the central index $j=\lfloor m/2\rfloor$, so
\[
\psi_m
:=
\min_{1\le j\le m-1}z_j(1;m)
=
\begin{cases}
\sech(m\mu/2), & m \text{ even},\\[1mm]
\dfrac{\cosh(\mu/2)}{\cosh(m\mu/2)}, & m \text{ odd},
\end{cases}
\]
with $\psi_1:=1$. For a source-terminated tail of $\ell$ edges, the minimum
value is $\eta_\ell:=1/q_\ell=\sech(\ell\mu)$.

\begin{lemma}[Admissible gap and tail lengths]\label{lem:monotone-psi}
The sequences $(\psi_m)_{m\ge1}$ and $(\eta_\ell)_{\ell\ge0}$ are strictly
decreasing to zero. Consequently, the gap lengths and tail lengths whose
minimum support is at least $\tau$ are respectively
$\{1,\dots,L\}$ and $\{0,\dots,B\}$, where
\begin{equation}\label{eq:LB}
L:=\max\{m\ge1:\psi_m\ge\tau\},
\qquad
B:=\max\{\ell\ge0:\eta_\ell\ge\tau\}.
\end{equation}
Both $L$ and $B$ are finite and well defined.
\end{lemma}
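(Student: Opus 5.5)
The plan is to work directly from the closed forms. We have $\eta_\ell=\sech(\ell\mu)$ with $\mu>0$, and $\psi_m$ is $\sech(m\mu/2)$ for even $m$ and $\cosh(\mu/2)/\cosh(m\mu/2)$ for odd $m\ge3$, with $\psi_1=1$.

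\textbf{The tail sequence.} Since $\cosh$ is strictly increasing on $[0,\infty)$ and unbounded, $\eta_\ell$ is strictly decreasing from $\eta_0=1$ to $0$.

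\textbf{The gap sequence.} I would unify the two parities into one formula,
\[
\psi_m=\frac{\cosh(\epsilon_m\mu/2)}{\cosh(m\mu/2)},
\]
where $\epsilon_m\in\{0,1\}$ is the parity of $m$. This formula also gives $\psi_1=1$, consistent with the convention. The numerator takes only the two values $1$ and $\cosh(\mu/2)$, while the denominator grows without bound, so $\psi_m\to0$.

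For strict monotonicity I compare consecutive terms by parity.
\begin{itemize}
\item From $m=2k$ to $m=2k+1$ we need $\cosh((2k+1)\mu/2)>\cosh(\mu/2)\cosh(k\mu)$. The addition formula gives
\[
\cosh(k\mu+\mu/2)=\cosh(k\mu)\cosh(\mu/2)+\sinh(k\mu)\sinh(\mu/2),
\]
and the last term is strictly positive for $k\ge1$.
\item From $m=2k+1$ to $m=2k+2$ we need $\cosh((k+1)\mu)>\cosh(\mu/2)\cosh((2k+1)\mu/2)$. The same addition formula applies, with the positive term $\sinh((2k+1)\mu/2)\sinh(\mu/2)$.
\item The base step $\psi_1=1>\psi_2=\sech\mu$ is immediate.
\end{itemize}
The main (minor) obstacle is this parity bookkeeping. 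Writing the unified numerator and using the addition formula handles every case uniformly. Alternatively, $\psi_m$ is the minimum of the discrete $\lambda$-harmonic profile, and the hyperbolic computation is the cleanest route.

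\textbf{Finiteness of $L$ and $B$ and the admissible sets.} A strictly decreasing sequence that tends to $0$, together with $\tau>0$, has $\{m:\psi_m\ge\tau\}$ finite and downward closed. It is nonempty because $\psi_1=1\ge\tau$. Hence this set equals $\{1,\dots,L\}$ with $L$ as in \eqref{eq:LB}. Likewise, $\eta_0=1\ge\tau$ gives $\{0,\dots,B\}$.

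Finally, recall the earlier identification: by \eqref{eq:pq-identity}, $\psi_m$ is the minimum interior support of a gap of $m$ edges between two sources, and $\eta_\ell$ is the minimum support on a source-terminated tail (\cref{lem:tail} with $x=1$). This identification shows these are exactly the admissible lengths.
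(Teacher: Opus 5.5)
Your proof is correct and is essentially the paper's argument. Both compare consecutive terms of $(\psi_m)$ with a hyperbolic identity. The paper uses the product-to-sum form $\cosh(\mu/2)\cosh(r\mu)=\tfrac12[\cosh((r+\tfrac12)\mu)+\cosh((r-\tfrac12)\mu)]$, centered at the even index, to get $\psi_{2r-1}>\psi_{2r}>\psi_{2r+1}$ in one stroke. You apply the equivalent addition formula one parity transition at a time.

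There is one slip in your second bullet. Cross-multiplying $\psi_{2k+1}>\psi_{2k+2}$ gives $\cosh(\mu/2)\cosh((k+1)\mu)>\cosh((2k+1)\mu/2)$. You wrote a different inequality, with $\cosh(\mu/2)$ on the other side. The correct inequality is immediate: from an odd index to the next even one, the numerator drops from $\cosh(\mu/2)$ to $1$ while the denominator grows, so no addition formula is needed. The inequality you stated is true and stronger, but it implies the needed one only after the extra step $\cosh(\mu/2)\ge1$, which you should either add or replace with the direct observation.
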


\begin{proof}
Since $\eta_\ell=\sech(\ell\mu)$ and $\mu>0$, the sequence
$(\eta_\ell)$ is strictly decreasing to zero.

For $\psi$, write
$\psi_{2r}=\sech(r\mu)$,
$\psi_{2r-1}=\cosh(\mu/2)/\cosh((r-\tfrac12)\mu)$, and
$\psi_{2r+1}=\cosh(\mu/2)/\cosh((r+\tfrac12)\mu)$.
Using
\[
\cosh(\mu/2)\cosh(r\mu)
=
\frac12\left[
\cosh((r+\tfrac12)\mu)
+
\cosh((r-\tfrac12)\mu)
\right]
\]
and the strict increase of $\cosh$ on $[0,\infty)$ gives
$\psi_{2r-1}>\psi_{2r}>\psi_{2r+1}$ for every $r\ge1$.
Thus $(\psi_m)$ is strictly decreasing to zero. Since
$\psi_1=\eta_0=1\ge\tau$, the corresponding admissible lengths are the stated
finite initial intervals.
\end{proof}
\section{Spiders: exact formula and a polynomial algorithm}\label{sec:spiders}

A \emph{spider} is a tree with at most one vertex of degree greater than two.
We exclude $K_1$, for which $\delta_{\lambda,\tau}(K_1)=1$ by
\cref{rem:conventions}. Every other spider can be written as
$T=\Sp(\ell_1,\dots,\ell_d)$ by choosing a vertex $c$ such that each
component of $T-c$ is a path. The $d\ge1$ components are the legs, with
lengths $\ell_1,\dots,\ell_d\ge1$, and $u_{i,j}$ denotes the vertex at
distance $j$ from $c$ on leg $i$. When $d\ge3$, the center $c$ is the unique
branching vertex; for paths the choice of $c$ need not be unique, and the
statements below hold for any such representation. Stars are the case
$\ell_1=\cdots=\ell_d=1$. Throughout this section, $(\lambda,\tau)$ is fixed,
and $B$ and $L$ are the admissible tail and gap lengths defined in
\eqref{eq:LB}.

We first record the cost of completing a leg once a source has been fixed.

\begin{lemma}[Continuation cost]\label{lem:continuation}
Consider a path segment of $m\ge0$ edges whose left endpoint is a source and
whose right endpoint has degree one. The minimum number of additional sources
needed to keep every vertex at support at least $\tau$ is
\begin{equation}\label{eq:Rdef}
R(m):=\left\lceil\frac{\pospart{m-B}}{L}\right\rceil.
\end{equation}
\end{lemma}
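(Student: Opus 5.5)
The plan is to reduce the question to the two extremal profiles $\psi_m$ and $\eta_\ell$ of \cref{lem:monotone-psi}, and then to count. Label the segment $w_0,w_1,\dots,w_m$, where $w_0$ is the given source, $w_m$ has degree one, and each interior $w_j$ has degree two. If $m=0$ there is nothing to do and $R(0)=0$, so assume $m\ge1$.

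\textbf{Decoupling.} Let $S'\subseteq\{w_1,\dots,w_m\}$ be any set of additional sources, listed as $w_0=w_{s_0},w_{s_1},\dots,w_{s_k}$ with $0=s_0<s_1<\dots<s_k\le m$. Every source is pinned at $1$, and every non-source $w_j$ with $0<j<m$ satisfies $h_{w_j}=\frac{\lambda}{2}(h_{w_{j-1}}+h_{w_{j+1}})$. If $w_m\notin S'$, then $h_{w_m}=\lambda h_{w_{m-1}}$.

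So on each block between consecutive sources $w_{s_{i-1}},w_{s_i}$, the restriction of $h^S$ solves the two-boundary problem of \cref{lem:two-boundary} with $x=1$ and gap $g_i:=s_i-s_{i-1}$. On the final block $w_{s_k},\dots,w_m$, it solves the reflecting-tail problem of \cref{lem:tail} with $x=1$ and $b=t:=m-s_k$, where $t=0$ means $w_m$ is itself a source. The uniqueness statements in those lemmas, together with uniqueness in \cref{thm:convergence}, identify these block solutions with $h^S$. In particular, the values on the segment do not depend on anything beyond $w_0$, because $w_0$ is pinned.

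\textbf{Characterizing feasibility.} By definition of $\psi$ and $\eta$, every vertex of the segment has support at least $\tau$ if and only if $\psi_{g_i}\ge\tau$ for all $i$ and $\eta_t\ge\tau$. Sources have support $1$, and $\psi_1=\eta_0=1$. By \cref{lem:monotone-psi}, this holds exactly when every $g_i\le L$ and $t\le B$. Since $m=\sum_{i=1}^k g_i+t$, a feasible placement with $k$ additional sources forces $m\le kL+B$. Hence $k\ge (m-B)/L$, so $k\ge R(m)$ as $k$ is an integer.

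\textbf{Achievability.} Set $k=R(m)$. If $k=0$ then $m\le B$, and the bare tail is feasible. Otherwise $(k-1)L<m-B\le kL$. Place sources at $w_L,w_{2L},\dots,w_{(k-1)L}$ and at $w_{m-B}$. The first $k-1$ gaps then have length $L$, the last gap has length $m-B-(k-1)L\in(0,L]$, and the tail has length $B$. All constraints hold, so the lower bound $R(m)$ is attained.

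\textbf{Main obstacle.} The delicate step is the decoupling. One must verify that the equilibrium restricted to each block really is the unique solution of the block boundary-value problem, including at the degree-one endpoint, where the equation is $z_b=\lambda z_{b-1}$. One must also handle the degenerate cases $g_i=1$ (no interior vertices) and $t=0$ (the endpoint is a source) consistently with the conventions $\psi_1=1$ and $\eta_0=1$. After that, the argument is just the counting above.
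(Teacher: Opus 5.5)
Your proposal is correct and follows essentially the same route as the paper: split the segment into source-to-source gaps and a terminal tail, use the two-boundary and reflecting-tail lemmas together with the monotonicity of $\psi$ and $\eta$ to reduce feasibility to $g_i\le L$ and $t\le B$, then count. Your explicit placement at $w_L,\dots,w_{(k-1)L},w_{m-B}$ is a concrete instance of the paper's step of writing $m-B$ as a sum of $r$ positive parts, each at most $L$.
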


\begin{proof}
Suppose $r$ additional sources are used. Let
$g_1,\dots,g_r\ge1$ be the successive source-to-source gaps and let $b\ge0$
be the terminal tail, so that
$g_1+\cdots+g_r+b=m$. By
\cref{lem:two-boundary,lem:tail}, feasibility is equivalent to
$g_j\le L$ for every $j$ and $b\le B$. Hence $m\le rL+B$, which gives
$r\ge\lceil(m-B)_+/L\rceil$.

Conversely, let $r$ be the right-hand side of \eqref{eq:Rdef}. If $m\le B$,
then $r=0$ and the source-free tail is feasible. If $m>B$, then
$r\le m-B\le rL$, so $m-B$ can be written as a sum of $r$ positive integers,
each at most $L$. Use these integers as the source-to-source gaps and take the
terminal tail to have length $B$. This realizes a feasible placement with
exactly $r$ additional sources.
\end{proof}

\subsection{Center coupling and leg types}

We first consider solutions containing the center.

\begin{proposition}[Cost with a source at the center]\label{prop:center-source}
Among $(\lambda,\tau)$-dominating sets of
$T=\Sp(\ell_1,\dots,\ell_d)$ that contain $c$, the minimum cardinality is
\begin{equation}\label{eq:Dc}
D_c=1+\sum_{i=1}^dR(\ell_i).
\end{equation}
\end{proposition}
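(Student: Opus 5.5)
Because the center $c$ is pinned at unit support, the equilibrium equations decouple across the legs, and the proof reduces to \cref{lem:continuation}, applied once per leg.

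\textbf{Decoupling.} Fix a set $S\ni c$. For a non-source vertex $u_{i,j}$ on leg $i$, the fixed-point equation $h_v=\frac{\lambda}{\deg v}\sum_{w\sim v}h_w$ involves only neighbours on the same leg, or $c$ when $j=1$. Since $h_c^S=1$ is fixed, the restriction of $h^S$ to $\{c\}\cup\{u_{i,1},\dots,u_{i,\ell_i}\}$ satisfies the following pinned system:
\begin{itemize}[nosep]
\item left endpoint $c$ at value $1$;
\item the sources $S\cap\{u_{i,j}\}$ at value $1$;
\item interior three-term recurrences $z_j=\frac{\lambda}{2}(z_{j-1}+z_{j+1})$;
\item the terminal reflecting equation $z_{\ell_i}=\lambda z_{\ell_i-1}$ at the degree-one vertex (when it is not a source).
\end{itemize}
This is exactly the path segment with a source at its left end and a degree-one vertex at its right end that is considered in \cref{lem:continuation}, with $m=\ell_i$. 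Uniqueness of these segment solutions (\cref{lem:two-boundary,lem:tail}) together with uniqueness of the global equilibrium (\cref{thm:convergence}) shows that $h^S$ on leg $i$ is the segment solution determined by $S$ restricted to that leg alone. The support at $c$ is $1\ge\tau$ automatically.

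\textbf{Lower bound.} If $S\ni c$ is $(\lambda,\tau)$-dominating, then by decoupling each leg's vertices meet the floor exactly when $S\cap\text{leg}_i$ is a feasible set of additional sources for the segment of length $\ell_i$. \Cref{lem:continuation} then gives $|S\cap\text{leg}_i|\ge R(\ell_i)$. Summing over $i$ and adding $1$ for $c$ gives $|S|\ge D_c$.

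\textbf{Upper bound.} For each leg $i$, choose an optimal placement of $R(\ell_i)$ sources given by \cref{lem:continuation}, and let $S$ be the union of these placements with $\{c\}$. By the same decoupling, every vertex of every leg has support at least $\tau$. Hence $S$ is dominating with $|S|=D_c$.

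\textbf{Main obstacle.} The only delicate point is justifying the decoupling rigorously, in particular at $c$. One must check that $c$'s own equation is simply dropped because $c\in S$, and that the first leg vertex's equation uses $\deg u_{i,1}=2$ (or $1$ if $\ell_i=1$, which is the tail case $b=1$ of \cref{lem:tail}, or a source). The case $\ell_i=1$ with $u_{i,1}\notin S$ needs $B\ge1$, which is consistent with $R(1)=0$ exactly when $B\ge1$. I would handle this by noting that the vector assembled from the per-leg segment solutions, with value $1$ at $c$, satisfies the full pinned system for $S$, and then invoking uniqueness.
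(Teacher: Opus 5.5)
Your proposal is correct and takes the same route as the paper. Pinning $c$ at unit support decouples the legs, and \cref{lem:continuation} applied to each leg with $m=\ell_i$ gives the per-leg cost $R(\ell_i)$, both as a lower bound and as an achievable placement. Your explicit justification of the decoupling via uniqueness in \cref{thm:convergence}, and your check of the case $\ell_i=1$, spell out details that the paper's two-line proof leaves implicit.
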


\begin{proof}
Once $c$ is pinned at one, the legs decouple. Leg $i$ is a source-terminated
path segment of length $\ell_i$, so \cref{lem:continuation} gives the minimum
additional cost $R(\ell_i)$. Summing over the legs and adding the center yields
\eqref{eq:Dc}.
\end{proof}

Now suppose $c\notin S$ and write $x=h_c^S$. Each leg presents one of two
transfer types to the center.

If the first source on leg $i$ occurs at distance $a$ from $c$, then
$1\le a\le\min\{L,\ell_i\}$. We call this the \emph{first-source type}
$F_a$. By \cref{lem:two-boundary}, the source-free prefix between $c$ and
$u_{i,a}$ has transfer data
\begin{equation}\label{eq:Fdata}
\bigl(A(F_a),C(F_a),\vartheta(F_a)\bigr)
=
\left(\frac{p_{a-1}}{p_a},\frac1{p_a},\vartheta_a\right),
\qquad
\kappa_i(F_a)=1+R(\ell_i-a).
\end{equation}
Here $\kappa_i(F_a)$ is the minimum number of sources contributed by leg $i$:
the first source at $u_{i,a}$ together with an optimal continuation of the
remaining suffix.

If leg $i$ contains no source, then necessarily $\ell_i\le B$. Writing
$b=\ell_i$, we call this the \emph{source-free type} $N_b$. By
\cref{lem:tail},
\begin{equation}\label{eq:Ndata}
\bigl(A(N_b),C(N_b),\vartheta(N_b)\bigr)
=
\left(\frac{q_{b-1}}{q_b},0,\tau q_b\right),
\qquad
\kappa_i(N_b)=0.
\end{equation}

Let $\mathcal O_i$ be the set of types available to leg $i$. For a type vector
$\boldsymbol t=(t_1,\dots,t_d)\in\prod_{i=1}^d\mathcal O_i$, define
\begin{equation}\label{eq:sumdata}
\begin{aligned}
A(\boldsymbol t)&:=\sum_{i=1}^dA(t_i),&
C(\boldsymbol t)&:=\sum_{i=1}^dC(t_i),\\
\Theta(\boldsymbol t)&:=
\max\bigl(\{\tau\}\cup\{\vartheta(t_i):1\le i\le d\}\bigr),&
K(\boldsymbol t)&:=\sum_{i=1}^d\kappa_i(t_i).
\end{aligned}
\end{equation}
The corresponding center value is
\begin{equation}\label{eq:center-value}
X(\boldsymbol t)
:=
\frac{\lambda C(\boldsymbol t)}
     {d-\lambda A(\boldsymbol t)}.
\end{equation}
The denominator is positive because $A(t_i)<1$ for every available type, so
$\lambda A(\boldsymbol t)<d$.

\begin{theorem}[Exact spider formula]\label{thm:spider-exact}
For every spider $T=\Sp(\ell_1,\dots,\ell_d)$,
\begin{equation}\label{eq:spider-formula}
\delta_{\lambda,\tau}(T)=\min\{D_c,D_{\bar c}\},
\end{equation}
where $D_c$ is given by \eqref{eq:Dc} and
\begin{equation}\label{eq:noncenter-cost}
D_{\bar c}
:=
\min\left\{
K(\boldsymbol t):
\boldsymbol t\in\prod_{i=1}^d\mathcal O_i,\
X(\boldsymbol t)\ge\Theta(\boldsymbol t)
\right\},
\end{equation}
with $D_{\bar c}:=\infty$ if no feasible type vector exists.
\end{theorem}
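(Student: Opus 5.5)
We split the minimum according to whether $c\in S$. The case $c\in S$ is handled by \cref{prop:center-source}, which gives the minimum $D_c$ over dominating sets containing $c$. It therefore suffices to show that the minimum over dominating sets with $c\notin S$ equals $D_{\bar c}$; then $\delta_{\lambda,\tau}(T)$ is the smaller of the two quantities.

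\textbf{Reduction to type vectors.} Fix $S$ with $c\notin S$, and assign to each leg $i$ its type. If the leg contains a source, the type is $F_a$, where $a$ is the distance from $c$ to the nearest source on the leg. If the leg contains none, the type is $N_{\ell_i}$.

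Because sources are pinned, the equilibrium restricted to the prefix $c,u_{i,1},\dots,u_{i,a}$ solves the two-boundary problem of \cref{lem:two-boundary} with $x=h_c^S$. For a source-free leg, the restriction solves the reflecting-tail problem of \cref{lem:tail}. In both cases $h^S_{u_{i,1}}=A(t_i)x+C(t_i)$.

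The equilibrium equation at $c$ is
\[
x=\frac{\lambda}{d}\sum_{i}h^S_{u_{i,1}}.
\]
Substituting the leg formulas and solving the resulting linear equation gives $x=X(\boldsymbol t)$. The denominator is positive, so the solution is unique and this is consistent with \cref{thm:convergence}.

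Feasibility then decomposes into local conditions:
\begin{itemize}[nosep]
\item the center requires $x\ge\tau$;
\item the prefix interiors require $x\ge\vartheta_a$, by \cref{lem:two-boundary};
\item a source-free leg requires $x\ge\tau q_b$, by \cref{lem:tail};
\item the part of each leg beyond its first source is a segment whose left endpoint is a source and which is decoupled from the rest of the tree.
\end{itemize}
In particular, a source-free leg is feasible only if $\tau q_b\le 1$, i.e.\ $b\le B$. Also $a\le L$, since $\vartheta_a\le 1$ is required: for $a>L$ we have $\psi_a<\tau$, so even $x=1$ fails. Hence every type lies in $\mathcal O_i$.

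\textbf{Counting and the converse.} With the types fixed, the cost of leg $i$ is at least $1+R(\ell_i-a)$ by \cref{lem:continuation}, or $0$ for a source-free leg. So $|S|\ge K(\boldsymbol t)$ and $X(\boldsymbol t)\ge\Theta(\boldsymbol t)$, which gives $|S|\ge D_{\bar c}$.

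Conversely, take a feasible $\boldsymbol t$. Place a source at $u_{i,a}$ for each type $F_a$, followed by an optimal continuation from \cref{lem:continuation}. The prefix computation then gives $h_c=X(\boldsymbol t)\ge\Theta(\boldsymbol t)$, and every vertex meets the floor. This yields a dominating set avoiding $c$ of size $K(\boldsymbol t)$.

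\textbf{Main obstacle.} The delicate point is the decoupling claim. The equilibrium on a suffix past the first source must depend only on that source's pinned value, and the prefix values must depend only on $x$ and the pinned endpoint. Both follow from the uniqueness of the equilibrium and the fact that pinned vertices separate the tree. A second point to check is the edge case $a=1$: there $\vartheta_1=0$ and $A=0$, and the prefix has no interior, so the condition reduces to the center's own constraint. Paths with $d\le 2$ fit the same argument, because the center equation uses $\deg c=d$.
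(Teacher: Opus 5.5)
Your proposal is correct and follows essentially the same route as the paper. You split on whether $c\in S$, reduce sets avoiding $c$ to leg types via \cref{lem:two-boundary,lem:tail}, solve the center equation to get $X(\boldsymbol t)$, bound the suffix cost by \cref{lem:continuation}, and realize feasible type vectors by explicit placement. The only differences are cosmetic: you rule out $a>L$ via $\vartheta_a>1$ and monotonicity in $x$ rather than by comparison with the both-ends-pinned profile $\psi_a$, and in the converse you reuse the forward restriction computation rather than assembling a candidate vector and invoking uniqueness.
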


\begin{proof}
The case $c\in S$ is \cref{prop:center-source}. Assume therefore that
$c\notin S$, and put $x=h_c^S$.

First consider any $(\lambda,\tau)$-dominating set $S$ avoiding $c$.
Fix a leg $i$. If the leg contains no source, then
\cref{lem:tail} gives leaf value $x/q_{\ell_i}\le1/q_{\ell_i}=\eta_{\ell_i}$.
Feasibility therefore implies $\ell_i\le B$, so the leg has type
$N_{\ell_i}$.

If the leg contains a source, let $a$ be the distance from $c$ to its first
source. The prefix between $c$ and that source has boundary values $x$ and
$1$. Since $x\le1$, its support values are bounded above by those obtained
with both boundary values equal to one. Feasibility therefore requires
$\psi_a\ge\tau$, hence $a\le L$, and the leg has type $F_a$. Once
$u_{i,a}$ is pinned, the remaining suffix is independent of the rest of the
tree, so \cref{lem:continuation} requires at least $R(\ell_i-a)$ additional
sources. Thus the source set determines a type vector $\boldsymbol t$ and
satisfies $|S|\ge K(\boldsymbol t)$.

The center equation is
\begin{equation}\label{eq:center-eq}
x
=
\frac{\lambda}{d}
\sum_{i=1}^d\bigl(A(t_i)x+C(t_i)\bigr),
\end{equation}
whose unique solution is $x=X(\boldsymbol t)$. The center itself must satisfy
$x\ge\tau$, and each leg must satisfy the threshold encoded by its type.
Together these conditions are exactly
$X(\boldsymbol t)\ge\Theta(\boldsymbol t)$. Hence every feasible source set
avoiding $c$ has cardinality at least $D_{\bar c}$.

Conversely, let $\boldsymbol t$ be feasible in
\eqref{eq:noncenter-cost}. For each leg of type $F_a$, place a source at
$u_{i,a}$ and complete its suffix with an optimal placement from
\cref{lem:continuation}; leave each leg of type $N_b$ source-free. The
resulting set $S$ has cardinality $K(\boldsymbol t)$.

Set $\hat x:=X(\boldsymbol t)$. Extend $\hat x$ along each first-source prefix
using \eqref{eq:two-boundary}, along each source-free leg using
\eqref{eq:tail-solution}, and along each suffix using its pinned equilibrium.
The resulting vector satisfies all equilibrium equations: the path equations
hold by \cref{lem:two-boundary,lem:tail}, the selected sources are pinned, and
the center equation is \eqref{eq:center-eq}. By uniqueness in
\cref{thm:convergence}, this vector is the equilibrium $h^S$.

Since $X(\boldsymbol t)\ge\Theta(\boldsymbol t)$, the center, every prefix,
and every source-free leg satisfy the support floor; the suffixes do so by
\cref{lem:continuation}. Hence $S$ is $(\lambda,\tau)$-dominating.
Minimizing over feasible type vectors gives $D_{\bar c}$, and comparison with
the center-selected case proves \eqref{eq:spider-formula}.
\end{proof}

\begin{remark}[The center need not be selected]\label{rem:center-not-always}
Both terms in \eqref{eq:spider-formula} are essential. In
\cref{ex:spider222}, the unique branching vertex belongs to no minimum
$(\lambda,\tau)$-dominating set. Thus an optimal placement need not contain
the unique maximum-degree vertex, even on a spider.
\end{remark}

\subsection{Dynamic programming over type counts}

The minimization in \eqref{eq:noncenter-cost} ranges over
$\prod_i|\mathcal O_i|$ assignments and is therefore exponential in the
number of legs if evaluated directly. For fixed $(\lambda,\tau)$, however,
the type alphabet
\[
\mathcal T:=\{F_1,\dots,F_L,N_1,\dots,N_B\},
\qquad
M:=|\mathcal T|=L+B,
\]
has constant size.

For a type vector $\boldsymbol t$, let
$\boldsymbol r=(r_t)_{t\in\mathcal T}\in\N^M$ be its count vector, where
$r_t$ is the number of legs assigned type $t$. The quantities
$A(\boldsymbol t)$, $C(\boldsymbol t)$, $\Theta(\boldsymbol t)$, and therefore
the feasibility condition $X(\boldsymbol t)\ge\Theta(\boldsymbol t)$, depend
only on $\boldsymbol r$. The assignment cost does not: $\kappa_i(t)$ depends
on the length of leg $i$. This separation leads to a dynamic program.

For $0\le i\le d$, let $\mathrm{DP}_i(\boldsymbol r)$ be the minimum cost
$\sum_{j=1}^i\kappa_j(t_j)$ over assignments
$t_j\in\mathcal O_j$ of the first $i$ legs having count vector
$\boldsymbol r$, with value $\infty$ if no such assignment exists. Set
$\mathrm{DP}_0(\boldsymbol0)=0$. Then
\begin{equation}\label{eq:DP}
\mathrm{DP}_i(\boldsymbol r)
=
\min_{\substack{t\in\mathcal O_i\\ r_t\ge1}}
\left\{
\mathrm{DP}_{i-1}(\boldsymbol r-\boldsymbol e_t)
+\kappa_i(t)
\right\}.
\end{equation}

\begin{theorem}[Exact polynomial-time algorithm on spiders]
\label{thm:spider-algorithm}
Fix rational $\lambda\in(0,1)$ and $\tau\in(0,1]$, and let
$M:=B(\lambda,\tau)+L(\lambda,\tau)$. Given an $n$-vertex spider,
$\delta_{\lambda,\tau}$ and a minimum source set attaining it can be computed
exactly using $O(M^2n^M)$ arithmetic operations on integers of bit-size
$O(M^2\log e+\log t+\log n)$, where $\lambda=c/e$ and $\tau=s/t$ are in
lowest terms. Thus, for every fixed rational pair $(\lambda,\tau)$, discounted
hitting domination is solvable in polynomial time on spiders.

If $\lambda$ and $\tau$ are part of the input, the running time is
$n^M\operatorname{poly}(M,\log e,\log t,\log n)$; hence the algorithm is XP
with parameter $M$.
\end{theorem}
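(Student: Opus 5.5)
The proof reduces the statement to \cref{thm:spider-exact}: compute $D_c$ and $D_{\bar c}$ exactly and take the minimum. The algorithm runs in four steps.

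\textbf{Step 1: precomputation.} Using the integer recurrences of \cref{rem:integer}, compute the scaled sequences $\widetilde p_k$ and $\widetilde q_k$ for $k\le\max\{L,B\}+1$. Then determine $L$ and $B$ by scanning $m=1,2,\dots$ and testing $\psi_m\ge\tau$ and $\eta_\ell\ge\tau$.
\begin{itemize}
\item By \cref{lem:monotone-psi}, the scan stops at the first failure.
\item Each test is an exact rational comparison: $\eta_\ell\ge\tau$ is $t\ge s\,q_\ell$, and $\psi_m$ is the minimum of $z_j(1;m)=(p_{m-j}+p_j)/p_m$.
\end{itemize}
From these, tabulate the type data $(A,C,\vartheta)$ for the $M$ types in $\mathcal T$, using \eqref{eq:Fdata}, \eqref{eq:Ndata} and \eqref{eq:theta}. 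All of these are rationals whose numerators and denominators are products of $O(M)$ transfer integers, hence of bit-size $O(M\log e+\log t)$.

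Also compute $R(m)$ by \eqref{eq:Rdef} for all $m\le n$. This gives $D_c$ via \cref{prop:center-source} in $O(n)$ operations.

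\textbf{Step 2: the dynamic program.} Run \eqref{eq:DP} over count vectors $\boldsymbol r\in\N^M$ with $\sum_t r_t=i\le d$. There are at most $\binom{d+M}{M}=O(n^M)$ such vectors, since $d\le n$ and $M$ is fixed.

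Each transition tries at most $M$ types. Availability $t\in\mathcal O_i$ is read off from $\ell_i$: $F_a$ requires $a\le\min\{L,\ell_i\}$, and $N_b$ requires $b=\ell_i\le B$. Each cost $\kappa_i(t)$ is an integer at most $n$. Total work is $O(Mn^M)$ small-integer operations, well within the claimed $O(M^2n^M)$.

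\emph{Correctness of the recurrence.} It is the standard exchange argument: the cost is additive over legs, and the final count vector determines the multiset of types but not which leg carries which type.

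\textbf{Step 3: the feasibility test (main obstacle).} For each final $\boldsymbol r$ with $\sum_t r_t=d$ and $\mathrm{DP}_d(\boldsymbol r)<\infty$, we must test $X\ge\Theta$ exactly. Here
\[
A=\sum_t r_tA(t),\qquad C=\sum_t r_tC(t),\qquad \Theta=\max\bigl(\{\tau\}\cup\{\vartheta(t):r_t\ge1\}\bigr).
\]
Since $d-\lambda A>0$, the condition $X\ge\Theta$ is equivalent to $\lambda C\ge\Theta\,(d-\lambda A)$, which is polynomial in integers after clearing denominators.

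The bit-size bookkeeping is where care is needed. Put all $A(t)$ and $C(t)$ over a common denominator, e.g.\ the product of $\widetilde p_1,\dots,\widetilde p_L,\widetilde q_1,\dots,\widetilde q_B$ with appropriate powers of $c$. Each factor has bit-size $O(M\log e)$ and there are $M$ of them, giving $O(M^2\log e)$. The counts $r_t\le n$ contribute the additive $\log n$, and $\tau$ contributes $\log t$.

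Each test costs $O(M)$ such operations. Summed over $O(n^M)$ vectors this gives $O(Mn^M)$ for the tests, and $O(M^2n^M)$ overall if the sums $A$ and $C$ are recomputed naively rather than incrementally. This matches the claim.

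\textbf{Step 4: output and the XP statement.} Let $D_{\bar c}$ be the minimum of $\mathrm{DP}_d(\boldsymbol r)$ over feasible $\boldsymbol r$, and output $\min\{D_c,D_{\bar c}\}$, which is correct by \cref{thm:spider-exact}.

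To recover a minimum source set, store back-pointers in the DP to obtain an optimal type vector. Then realize each $F_a$ leg as in the converse part of that theorem, using the explicit gap construction of \cref{lem:continuation}, in $O(n)$ additional time.

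For the XP statement, treat $\lambda$ and $\tau$ as input. Then $L$ and $B$ are computed in time polynomial in $M$, and the same operation count and bit-size bound give $n^M\operatorname{poly}(M,\log e,\log t,\log n)$.
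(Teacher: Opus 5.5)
Your proposal is correct and follows essentially the same route as the paper. Both arguments rest on the same ingredients:
\begin{itemize}
\item a dynamic program over type-count vectors, justified by the fact that $A$, $C$ and $\Theta$ depend only on the count vector while the costs $\kappa_i$ are additive over legs;
\item exact determination of $L$ and $B$ by scanning with integer comparisons;
\item a final feasibility test $\lambda C\ge\Theta(d-\lambda A)$, cleared of denominators by a common multiple of the form $t\,c^M\prod_a\widetilde p_a\prod_b\widetilde q_b$;
\item back-pointers together with \cref{lem:continuation} to reconstruct an optimal source set.
\end{itemize}
The differences are only cosmetic: you recompute the aggregate data once per final count vector instead of maintaining it along transitions, which if anything lowers the operation count.
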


\begin{proof}
By \eqref{eq:sumdata}, the quantities
$A(\boldsymbol t)$, $C(\boldsymbol t)$, and $\Theta(\boldsymbol t)$ depend
only on the count vector of $\boldsymbol t$. Hence all assignments having the
same count vector are either all feasible or all infeasible. It follows that
$D_{\bar c}$ is the minimum of $\mathrm{DP}_d(\boldsymbol r)$ over count
vectors $\boldsymbol r$ satisfying $\sum_t r_t=d$ and
$X(\boldsymbol r)\ge\Theta(\boldsymbol r)$. Recurrence \eqref{eq:DP} follows
by conditioning on the type assigned to leg $i$, and its correctness follows
inductively. Predecessor pointers recover an optimal type assignment; the
corresponding source set is then reconstructed using
\cref{lem:continuation}.

For the state count, a vector reachable after $i$ legs has nonnegative
coordinates summing to $i$. Hence there are at most
$\binom{i+M-1}{M-1}=O(i^{M-1})$ states in layer $i$, and therefore
$O(n^M)$ states over all layers. Each state has at most $M$ incoming
transitions. Maintaining the aggregate transfer data requires at most
$O(M)$ arithmetic work per transition, giving
$O(M^2n^M)$ arithmetic operations.

It remains to control the bit-size. The values $B$ and $L$ can be determined
without evaluating transcendental functions. By
\cref{lem:monotone-psi}, admissible tails and gaps form initial intervals, so
one tests the rational inequalities $1/q_b\ge\tau$ and
$\min_{1\le j\le m-1}(p_{m-j}+p_j)/p_m\ge\tau$ until the first failure in
each sequence. By \cref{rem:integer}, these tests reduce to exact integer
comparisons.

Only $p_k$ and $q_k$ with $k\le M+1$ are needed. Define
\[
\Pi
:=
t\,c^M
\prod_{a=1}^{L}\widetilde p_a
\prod_{b=1}^{B}\widetilde q_b.
\]
Then
$\log\Pi=O(M^2\log e+\log t)$, and the denominators of all transfer data in
\eqref{eq:Fdata}--\eqref{eq:Ndata} divide $\Pi$. Hence, for every count
vector, the scaled quantities
$\widehat A:=\Pi A$,
$\widehat C:=\Pi C$, and
$\widehat\Theta:=\Pi\Theta$
are integers of bit-size
$O(M^2\log e+\log t+\log n)$.

Since $d-\lambda A>0$, the feasibility condition $X\ge\Theta$ is equivalent
to the integer inequality
\[
c\,\widehat C\,\Pi
\ge
\widehat\Theta\,
\bigl(e\,d\,\Pi-c\,\widehat A\bigr).
\]
Both sides have bit-size
$O(M^2\log e+\log t+\log n)$. Thus each arithmetic operation has polynomial
bit complexity in the stated parameters, which proves the running-time
bounds.
\end{proof}

\begin{corollary}[Exact one-source criterion]\label{cor:one-source}
Let $T=\Sp(\ell_1,\dots,\ell_d)$. Then
$\delta_{\lambda,\tau}(T)=1$ if and only if one of the following holds:

\begin{enumerate}[label=\textup{(\roman*)}]
\item $\ell_i\le B$ for every $i$, in which case $\{c\}$ is
$(\lambda,\tau)$-dominating;

\item there exist a leg $i$ and an integer $a$ satisfying
$\max\{1,\ell_i-B\}\le a\le\min\{L,\ell_i\}$ such that
$\ell_j\le B$ for every $j\ne i$ and
\begin{equation}\label{eq:one-source-off-center}
\frac{\lambda/p_a}
{d-\lambda\left(
p_{a-1}/p_a+\sum_{j\ne i}q_{\ell_j-1}/q_{\ell_j}
\right)}
\ge
\max\left\{
\tau,\vartheta_a,\max_{j\ne i}\tau q_{\ell_j}
\right\}.
\end{equation}
In this case $\{u_{i,a}\}$ is $(\lambda,\tau)$-dominating.
\end{enumerate}

An empty sum and an empty maximum are understood to be zero. Thus, when
$d=1$, \eqref{eq:one-source-off-center} becomes
$(\lambda/p_a)/(1-\lambda p_{a-1}/p_a)
\ge\max\{\tau,\vartheta_a\}$.
\end{corollary}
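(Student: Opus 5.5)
The plan is to specialize \cref{thm:spider-exact} to the value one. First, $\delta_{\lambda,\tau}(T)\ge1$ always holds, because $h^{\varnothing}=0<\tau$. Hence $\delta_{\lambda,\tau}(T)=1$ holds exactly when $D_c=1$ or $D_{\bar c}=1$, since both quantities are at least one.

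\textbf{The center-selected term.} By \eqref{eq:Dc}, $D_c=1$ if and only if $R(\ell_i)=0$ for every leg. By \eqref{eq:Rdef}, this means $\ell_i\le B$ for every $i$, which is condition (i). The same computation shows that $\{c\}$ is the witnessing set, since \cref{prop:center-source} is attained by placing the center and using zero continuation sources.

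\textbf{The center-avoiding term.} Next I characterize when $D_{\bar c}=1$, meaning that some feasible type vector $\boldsymbol t$ has $K(\boldsymbol t)=1$.
\begin{itemize}
\item Since $\kappa_j(N_b)=0$ and $\kappa_j(F_a)\ge1$, a cost-one vector has exactly one leg $i$ of first-source type $F_a$ with $R(\ell_i-a)=0$. Every other leg has type $N_{\ell_j}$.
\item A vector with all legs of type $N$ has $C(\boldsymbol t)=0$, hence $X=0<\tau$, so it is never feasible. Thus such a vector must have exactly one $F$-leg.
\item Availability of $F_a$ requires $1\le a\le\min\{L,\ell_i\}$. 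The condition $R(\ell_i-a)=0$ is equivalent to $\ell_i-a\le B$. Together these give the stated range $\max\{1,\ell_i-B\}\le a\le\min\{L,\ell_i\}$.
\item Availability of $N_{\ell_j}$ for $j\ne i$ requires $\ell_j\le B$.
\end{itemize}
To obtain \eqref{eq:one-source-off-center}, I substitute \eqref{eq:Fdata} and \eqref{eq:Ndata} into \eqref{eq:sumdata} and \eqref{eq:center-value}. This gives $C(\boldsymbol t)=1/p_a$ and $A(\boldsymbol t)=p_{a-1}/p_a+\sum_{j\ne i}q_{\ell_j-1}/q_{\ell_j}$, while $\Theta(\boldsymbol t)=\max\{\tau,\vartheta_a,\max_{j\ne i}\tau q_{\ell_j}\}$. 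The feasibility condition $X(\boldsymbol t)\ge\Theta(\boldsymbol t)$ is then exactly \eqref{eq:one-source-off-center}.

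\textbf{The witnessing set.} The construction in the proof of \cref{thm:spider-exact} places a source at $u_{i,a}$ and adds $R(\ell_i-a)=0$ continuation sources. The resulting dominating set is therefore $\{u_{i,a}\}$.

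\textbf{The case $d=1$.} The empty sum and empty maximum conventions reduce the condition to the displayed inequality. One subtlety needs checking here: in the proof of \cref{thm:spider-exact} the center is a degree-one endpoint. Its equation is $x=\lambda(A x+C)$, which agrees with \eqref{eq:center-value} at $d=1$, so the formula applies unchanged.

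The main obstacle is bookkeeping rather than substance. Specifically, I need to check two points:
\begin{itemize}
\item The all-$N$ vector is genuinely excluded, which holds because $C=0$ forces $X=0$.
\item The conventions ``empty maximum $=0$'' and the $\vartheta_1=0$ case are consistent with $\Theta$ as defined in \eqref{eq:sumdata}, since $\tau$ is always included in that maximum.
\end{itemize}
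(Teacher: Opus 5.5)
Your proposal is correct and follows essentially the same route as the paper: you specialize the exact spider formula to the value one and read off $D_c=1$ as $R(\ell_i)=0$ for every leg. You then characterize cost-one type vectors as a single $F_a$ leg with vanishing suffix cost and all other legs of type $N_{\ell_j}$, and substitute the transfer data into $X(\boldsymbol t)\ge\Theta(\boldsymbol t)$. Your additional checks are details the paper's terse proof leaves implicit: that $\delta\ge1$, that all-$N$ vectors are infeasible because $C=0$, and that the $d=1$ center equation agrees with the general formula.
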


\begin{proof}
The first alternative is exactly $D_c=1$ in \eqref{eq:Dc}, because
$R(\ell_i)=0$ if and only if $\ell_i\le B$.

Now suppose a one-source solution avoids $c$. Exactly one leg, say $i$, has
type $F_a$, while every other leg has type $N_{\ell_j}$. Its suffix cost
must vanish, so $\ell_i-a\le B$; together with availability of $F_a$, this
is equivalent to
$\max\{1,\ell_i-B\}\le a\le\min\{L,\ell_i\}$. The remaining legs must satisfy
$\ell_j\le B$. Substituting these types into
\eqref{eq:center-value} and \eqref{eq:sumdata} gives precisely
\eqref{eq:one-source-off-center}. The result now follows from
\cref{thm:spider-exact}.
\end{proof}

\begin{corollary}[Stars]\label{cor:star-from-spider}
For $m\ge1$,
$\delta_{\lambda,\tau}(K_{1,m})=1$ if $\tau\le\lambda$, and
$\delta_{\lambda,\tau}(K_{1,m})=m+1$ if $\tau>\lambda$.
\end{corollary}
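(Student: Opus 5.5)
We will argue directly from \cref{prop:distance-bounds}, without passing through the type machinery of \cref{thm:spider-exact}. The two cases $\tau\le\lambda$ and $\tau>\lambda$ are complementary, and each reduces to a one-line estimate on $h^S$. Write $c$ for the center of $K_{1,m}$ and $u_1,\dots,u_m$ for its leaves. This includes the degenerate case $m=1$, where $K_{1,1}=K_2$ and either vertex may serve as $c$.

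For $\tau\le\lambda$, we take $S=\{c\}$. Each leaf $u_j$ has the single neighbor $c$, so the walk started at $u_j$ hits $S$ at time $1$ almost surely. By \eqref{eq:homogeneous-hitting}, $h_{u_j}^S=\lambda\ge\tau$. Also $h_c^S=1$. Hence $\{c\}$ is $(\lambda,\tau)$-dominating. Since $S=\varnothing$ gives $h^{\varnothing}=0<\tau$, we get $\delta_{\lambda,\tau}(K_{1,m})=1$. Alternatively, this is alternative (i) of \cref{cor:one-source}, since $\ell_i=1\le B$ exactly when $\eta_1=\lambda\ge\tau$.

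For $\tau>\lambda$, let $S$ be any nonempty source set and let $v\notin S$. Then $\dist(v,S)\ge1$, and the upper bound in \cref{prop:distance-bounds} gives $h_v^S\le\lambda^{\dist(v,S)}\le\lambda<\tau$. If $v$ lies in a component without a source, the convention $\lambda^\infty=0$ gives the same conclusion, although $K_{1,m}$ is connected. So every vertex must belong to $S$, and $S=V$ is forced. Conversely, $h^V=\one$ and $\tau\le1$, so $V$ is $(\lambda,\tau)$-dominating. Therefore $\delta_{\lambda,\tau}(K_{1,m})=|V|=m+1$.

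There is no real obstacle. The only points needing care are the boundary case $\tau=\lambda$, which is covered because the leaf value equals $\lambda$ exactly, and the observation that the second case never uses the star structure. For $\tau>\lambda$ the same argument forces $S=V$ in any graph.
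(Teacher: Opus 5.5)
Your proof is correct, but it takes a more elementary route than the paper. The paper derives the corollary from the spider machinery by writing $K_{1,m}=\Sp(1,\dots,1)$:
\begin{itemize}
\item for $\tau\le\lambda$ it uses $\tau\le\lambda=\eta_1\iff B\ge1$ together with \cref{cor:one-source}(i);
\item for $\tau>\lambda$ it notes that $B=0$, so no source-free type $N_b$ is available. The all-$F_1$ type vector then gives $X(\boldsymbol t)=\lambda<\tau$, hence $D_{\bar c}=\infty$, while $R(1)=1$ gives $D_c=m+1$.
\end{itemize}
You bypass \cref{thm:spider-exact} entirely. Your first case is the exact computation $h_{u_j}^{\{c\}}=\lambda$, since $T_S=1$ almost surely. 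Your second case is the one-step bound $h_v^S\le\lambda$ for every non-source $v$.

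Your route buys brevity and generality. As you observe, the conclusion $\delta=|V|$ for $\tau>\lambda$ holds on every graph. It holds in fact for every row-stochastic $W$ with homogeneous fidelity, because $h_i^S=\lambda\sum_j w_{ij}h_j^S\le\lambda$ for $i\notin S$. This shows that the jump to full selection contrasted in \cref{rem:complete-threshold} is not a star phenomenon: stars and complete graphs differ only in the regime $\tau\le\lambda$. The paper's route instead serves as a consistency check of the spider formula, exercising both the $D_c$ and $D_{\bar c}$ branches on the simplest family.

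One small bookkeeping point: in your second case you restrict to nonempty $S$. The empty set should be dismissed explicitly via $h^{\varnothing}=0<\tau$, as you already did in the first case.
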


\begin{proof}
Write $K_{1,m}=\Sp(1,\dots,1)$. If $\tau\le\lambda=\eta_1$, then
$B\ge1$, so every leg has length at most $B$ and the center alone is feasible
by \cref{cor:one-source}.

If $\tau>\lambda$, then $B=0$. No source-free leg type is available. If the
center is not selected, every leg must have type $F_1$, which gives
$A(\boldsymbol t)=0$, $C(\boldsymbol t)=m$, and
$X(\boldsymbol t)=\lambda<\tau$; hence $D_{\bar c}=\infty$.
If the center is selected, then $R(1)=1$, so
$D_c=1+m$. Therefore every vertex must be a source.
\end{proof}

\begin{example}[The branching vertex can be absent from every optimum]
\label{ex:spider222}
Take $\lambda=4/5$ and $\tau=1/2$. Then $B=1$ and $L=3$. For
$T=\Sp(2,2,2)$, selecting the center costs
$D_c=1+3R(2)=4$.

Since every leg has length $2>B$, no source-free type is available, so any
solution avoiding $c$ uses at least one source on each leg and therefore has
size at least three. Assigning type $F_1$ to all three legs gives
$A(\boldsymbol t)=0$, $C(\boldsymbol t)=3$, and
$X(\boldsymbol t)=4/5\ge1/2=\Theta(\boldsymbol t)$. Hence
\[
\delta_{4/5,1/2}\bigl(\Sp(2,2,2)\bigr)=3,
\]
attained by
$S=\{u_{1,1},u_{2,1},u_{3,1}\}$.

No three-element solution containing $c$ is feasible: with only two further
sources, at least one leg has no additional source, leaving a tail of length
two, whereas $R(2)=1$. Thus the unique branching vertex belongs to no minimum
$(4/5,1/2)$-dominating set.
\end{example}

\begin{figure}[t]
\centering
\begin{tikzpicture}[scale=1.1,
 src/.style={circle,draw,fill=black,inner sep=2.3pt},
 agn/.style={circle,draw,inner sep=2.3pt}]
\node[agn,label=below:{$c$}] (c) at (0,0) {};
\foreach \ang/\i in {90/1,210/2,330/3}{
  \node[src] (s\i) at (\ang:1.45) {};
  \node[agn] (l\i) at (\ang:2.7) {};
  \draw (c)--(s\i)--(l\i);
}
\node at (0,-2.0)
{$S=\{u_{1,1},u_{2,1},u_{3,1}\}$,\quad
$h_c^S=4/5$,\quad $h^S_{u_{i,2}}=4/5$};
\end{tikzpicture}
\caption{A minimum $(4/5,1/2)$-dominating set of
$\Sp(2,2,2)$ (\cref{ex:spider222}). Filled vertices are sources; the
branching vertex is not selected.}
\label{fig:spider222}
\end{figure}

\section{Complete graphs}\label{sec:dense-sparse}

The star formula of \cref{cor:star-from-spider} provides a sparse benchmark:
for $K_{1,m}$, one source suffices when $\tau\le\lambda$, whereas
$\tau>\lambda$ forces every vertex to be selected. Complete graphs exhibit a
different behavior below this one-step threshold: each additional source
raises the common support of all remaining vertices.

\begin{proposition}[Complete graphs]\label{prop:complete}
Let $n\ge2$. For $0<\tau<1$,
\[
\delta_{\lambda,\tau}(K_n)
=
\min\left\{
n,\,
\left\lceil
\frac{\tau(n-1)(1-\lambda)}
     {\lambda(1-\tau)}
\right\rceil
\right\},
\]
and $\delta_{\lambda,1}(K_n)=n$.
\end{proposition}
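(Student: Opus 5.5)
By vertex-transitivity of $K_n$, $h^S$ depends only on $k=|S|$. The plan is to compute it explicitly and then read off the smallest feasible $k$.

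\emph{Step 1: the symmetric equilibrium.} Fix $S$ with $|S|=k$, where $1\le k\le n-1$. Permutations of $V$ fixing $S$ setwise are automorphisms of the walk. By uniqueness in \cref{thm:convergence}, $h^S$ is therefore constant, say equal to $x$, on $U=V\setminus S$. Each non-source vertex has $k$ source neighbours and $n-k-1$ non-source neighbours, so the equilibrium equation reads
\[
x=\frac{\lambda}{n-1}\bigl(k+(n-k-1)x\bigr),
\qquad\text{hence}\qquad
x=\frac{\lambda k}{(n-1)-\lambda(n-k-1)}=\frac{\lambda k}{(n-1)(1-\lambda)+\lambda k}.
\]
The denominator is positive, and sources have value $1\ge\tau$. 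So $S$ is $(\lambda,\tau)$-dominating exactly when $x\ge\tau$. When $k=n$, $S=V$ is always dominating, and $k=0$ is never dominating because $\tau>0$.

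\emph{Step 2: solve the inequality.} For $1\le k\le n-1$, clear the positive denominator: $x\ge\tau$ is equivalent to $\lambda k\ge\tau(n-1)(1-\lambda)+\tau\lambda k$, that is, $\lambda k(1-\tau)\ge\tau(n-1)(1-\lambda)$. For $\tau<1$ this is $k\ge\tau(n-1)(1-\lambda)/(\lambda(1-\tau))=:k^*$. The feasible sizes are therefore $\{k: \lceil k^*\rceil\le k\le n-1\}\cup\{n\}$, whose minimum is $\min\{n,\lceil k^*\rceil\}$. This holds even when $\lceil k^*\rceil\ge n$, in which case only $k=n$ works. One should also check that $\lceil k^*\rceil\ge1$, which holds since $k^*>0$. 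Feasibility of a set depends only on its size, so there is no subtlety in passing from "some set of size $k$" to "every set of size $k$".

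\emph{Step 3: the case $\tau=1$.} The inequality becomes $0\ge(n-1)(1-\lambda)$, which is false for $n\ge2$. Equivalently, $x<1$ because the denominator strictly exceeds $\lambda k$. Hence no proper subset works and $\delta_{\lambda,1}(K_n)=n$.

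The only step needing care is the symmetry argument in Step 1, which is justified by uniqueness of the fixed point. Alternatively, one can directly verify that the constant vector solves the pinned system and then invoke uniqueness. Everything else is routine algebra.
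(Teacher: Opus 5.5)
Your proof is correct and follows the paper's own argument: symmetry together with uniqueness of the pinned equilibrium (\cref{thm:convergence}) reduces the non-source supports to one scalar $\lambda k/[(n-1)-\lambda(n-k-1)]$, you solve the floor inequality for $k$, and you settle $\tau=1$ by noting that non-sources stay strictly below one. You make the symmetry step and the edge cases $k=0$ and $\lceil k^*\rceil\ge n$ more explicit than the paper does, but the route is the same.
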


\begin{proof}
Suppose $q<n$ vertices are sources. By symmetry, every non-source has the
same equilibrium support $y_q$. Since each vertex has degree $n-1$,
\[
y_q
=
\frac{\lambda}{n-1}
\bigl(q+(n-q-1)y_q\bigr),
\]
and therefore
$y_q=\lambda q/[\,n-1-\lambda(n-q-1)\,]$.
The condition $y_q\ge\tau$ is equivalent to
$q\ge\tau(n-1)(1-\lambda)/[\lambda(1-\tau)]$.
Taking the least admissible integer $q$, with $q=n$ when no proper source set
is feasible, gives the formula. If $\tau=1$, every non-source has support
strictly below one, so all $n$ vertices must be sources.
\end{proof}

\begin{remark}[Threshold behavior on complete graphs]\label{rem:complete-threshold}
For $0<\tau<\lambda$, the minimum source fraction satisfies
\[
\frac{\delta_{\lambda,\tau}(K_n)}{n}
\longrightarrow
\frac{\tau(1-\lambda)}
     {\lambda(1-\tau)}
\qquad\text{as }n\to\infty.
\]
At the boundary $\tau=\lambda$ one has
$\delta_{\lambda,\lambda}(K_n)=n-1$, while
$\delta_{\lambda,\tau}(K_n)=n$ for $\tau>\lambda$.
Thus complete graphs admit a gradual increase in the required number of
sources below the one-step fidelity $\lambda$. In contrast,
$K_{1,m}$ remains feasible with a single central source throughout
$\tau\le\lambda$ and then jumps to full selection when $\tau>\lambda$.
\end{remark}

\section{Source placement versus centrality and domination}
\label{sec:experiments}

We next compare discounted-floor placement with standard centrality rankings
and ordinary domination on one real and one synthetic network. The examples
are chosen to test two distinctions suggested by the theory: whether
high-centrality vertices yield efficient source placements, and whether
classical domination is a reliable proxy for the discounted support floor.

For the Zachary karate-club network, $W$ is the row normalization
$w_{ij}=\omega_{ij}/\sum_k\omega_{ik}$ of the integer interaction weights
$\omega_{ij}$ supplied with the data \cite{zachary1977}. For the unweighted
Barab\'asi--Albert graph $\mathrm{BA}(120,2)$ with seed $7$, $W$ is the
simple-random-walk matrix. Equilibria were computed by direct solution of
\eqref{eq:equilibrium}. Network operations used NetworkX
\cite{hagberg2008}, and the mixed-integer programs were solved with HiGHS
through the \texttt{milp} routine of SciPy \cite{virtanen2020}. All four
mixed-integer runs ($\delta$ and $\gamma$ on both networks) terminated with
optimal status, zero reported MIP gap, and dual bound equal to the reported
optimum. The reproducibility code and automated verification workflow are
described in the Data and code availability statement.

We compare four placement procedures. The first, denoted HD, is the greedy
rule associated with the submodular-cover function $Q_\tau$ of
\cref{prop:submodular-cover}: at each step it adds a vertex of maximum
marginal gain, with ties broken by smallest vertex index. The second and
third add vertices in decreasing order of degree and closeness centrality
\cite{freeman1979}, respectively, again breaking ties by index. The fourth is
the standard greedy dominating-set rule, which repeatedly selects a vertex
whose closed neighborhood contains the largest number of currently
undominated vertices.

The exact discounted optimum is computed from \eqref{eq:milp}. Every source
set returned by the solver was independently re-evaluated using
\eqref{eq:equilibrium} to verify the support floor (worst-vertex supports
$0.595$ on Karate and $0.303$ on BA, far above solver tolerance). The
domination number $\gamma$ was computed from the standard binary formulation
$\min\{\sum_i y_i:\sum_{j\in N[i]}y_j\ge1\text{ for all }i,\,
y\in\{0,1\}^V\}$.
For Karate, exhaustive enumeration provides an additional check:
no four-vertex source set reaches $\tau=0.55$, and the largest worst-vertex
support among all four-vertex sets is $0.524$. Since HD attains five sources,
writing $W_{\mathrm K}$ for the normalized Zachary weight matrix,
$\delta_{0.85,W_{\mathrm K},0.55}=5$. On the BA graph,
$\delta_{0.85,0.30}=7$.

\begin{table}[t]
\centering
\small
\begin{tabular}{@{}l c cccc c ccc@{}}
\toprule
& & \multicolumn{4}{c}{Sources to meet floor}
& & \multicolumn{3}{c}{Ordinary domination}\\
\cmidrule(lr){3-6}\cmidrule(lr){8-10}
Network
& $(\lambda,\tau)$
& $\delta$
& HD
& degree
& closeness
&
& $\gamma$
& $|D_{\mathrm g}|$
& $\min_i h_i^{D_{\mathrm g}}$\\
\midrule
Karate & $(0.85,0.55)$ & $5$ & $5$ & $11$ & $17$
& & $4$ & $4$ & $0.52$\\
BA & $(0.85,0.30)$ & $7$ & $7$ & $11$ & $11$
& & $24$ & $26$ & $0.63$\\
\bottomrule
\end{tabular}
\caption{Source-placement results for the weighted Zachary karate-club network
\cite{zachary1977} ($n=34$, $m=78$, $\Delta=17$) and an unweighted
$\mathrm{BA}(120,2)$ graph ($n=120$, $m=236$, $\Delta=20$). Here $\delta$
and $\gamma$ are the exact discounted hitting domination and domination numbers,
respectively, and $D_{\mathrm g}$ is the greedy dominating set.}
\label{tab:experiments}
\end{table}

The results show that centrality orderings can be substantially less efficient
than direct optimization of the support floor. On Karate, HD reaches the
optimal cardinality of five, whereas degree and closeness require eleven and
seventeen sources, respectively. The five-source HD set is contained in both
threshold-reaching centrality prefixes, so the loss is due to selection order
rather than the absence of the relevant vertices from the rankings. Using
weighted strength instead of degree again requires eleven sources, while
weighted closeness based on inverse interaction weights requires eighteen.
The complete source-addition curves are provided in the reproducibility
repository.

Classical domination and discounted-floor feasibility are also incomparable.
On Karate, $\gamma=4$, and exhaustive enumeration gives exactly nine minimum
dominating sets; none reaches the floor $\tau=0.55$, and the largest
worst-vertex support among them is $0.523$. Thus adjacency to a source does
not by itself guarantee adequate discounted support. Conversely, domination
is not necessary. On the BA graph,
$\delta_{0.85,0.30}=7<24=\gamma$, so no minimum discounted-floor solution can
be a dominating set. The same holds on Karate: the greedy HD set
$\{0,1,5,23,33\}$, using the zero-based NetworkX labels, is
cardinality-optimal and feasible, although vertex $24$ has no selected vertex
in its closed neighborhood. Among the $55$ feasible five-source sets on
Karate, $36$ are dominating. Thus neither feasibility condition implies the
other.

These computations reflect the structural behavior already visible in
\cref{ex:not-distance,rem:center-not-always}. Discounted hitting support
depends on transition probabilities and on the full collection of paths to the
source set, not only on distance, degree, or adjacency coverage. On spiders,
\cref{thm:spider-algorithm} exploits this structure to obtain an exact
polynomial-time algorithm for fixed rational $(\lambda,\tau)$. On general
networks, \eqref{eq:milp} remains an exact formulation, while
\cref{thm:hardness} shows that, unless $\mathrm{P}=\mathrm{NP}$, no
polynomial-time algorithm exists for every fixed rational pair
$(\lambda,\tau)$.
\section{Concluding remarks}\label{sec:conclusion}

Discounted hitting domination turns a known discounted first-hitting potential
into a minimum-cardinality uniform-coverage problem. The hitting representation
\eqref{eq:hitting} yields monotone submodular support functions, while the
uniform floor admits both an exact submodular-cover formulation and an exact
mixed-integer linear formulation. The distance bounds recover distance-$r$
domination whenever
$\lambda^{r+1}<\tau\le(\lambda/\Delta)^r$, yielding NP-completeness and
APX-completeness at the fixed pair $(\lambda,\tau)=(1/4,1/14)$ on graphs of
maximum degree three. On spiders, the transfer formulas reduce the equilibrium
constraints to finitely many leg types and give an exact polynomial-time
algorithm for every fixed rational pair $(\lambda,\tau)$. The computational
examples show that optimal discounted-floor placements can differ sharply from
degree and closeness rankings and from classical domination.

Several directions remain open. One is whether the transfer approach extends
from spiders to caterpillars or to trees with a bounded number of branching
vertices. A related question appears in exponential domination: Dankelmann
et al.\ asked whether the exponential domination number of a tree can be
computed in polynomial time \cite{dankelmann2009}, and Bessy, Ochem, and
Rautenbach answered this affirmatively for subcubic trees \cite{bessy2016}.
A second direction is fault-tolerant discounted hitting domination, where the
support floor must survive the loss of one or more sources. Even on spiders,
source failure changes the transfer type seen at the center and requires
additional state information.

\section*{Data and code availability}

The computations in \cref{sec:experiments} use the weighted Zachary karate-club
network \cite{zachary1977} and an unweighted $\mathrm{BA}(120,2)$ graph with
seed $7$. Reproducibility code and pinned software dependencies are available
at
\href{https://github.com/aallagan/discounted-hitting-domination}
{\texttt{github.com/aallagan/discounted-hitting-domination}}.

The repository reproduces the mixed-integer computations of $\delta$ and
$\gamma$ in \cref{tab:experiments}, the exhaustive Karate checks, the
source-selection orders and support curves, and the weighted Karate baselines.
It also verifies selected theoretical results, including the distance-$r$
recovery identity, the spider formula of \cref{thm:spider-exact} against
exhaustive subset minimization, and the complete-graph formula using exact
rational arithmetic where applicable. An automated GitHub Actions workflow
installs the pinned dependencies and reruns the reproducibility and verification
checks.

\end{document}